\DeclareSymbolFont{msbm}{U}{msb}{m}{n}
\DeclareMathSymbol{\emptyset}{\mathord}{msbm}{'077}
\newcommand{\K}{{\sf K}}
\renewcommand{\S}{{\sf S}}
\newcommand{\I}{\mathcal{I}}
\renewcommand{\L}{\mathcal{L}}
\newcommand{\zerovec}{0}
\newcommand{\del}{\partial}
\newcommand{\simp}{\Delta}
\newcommand{\R}{\mathbb R}
\newcommand{\Z}{\mathbb Z}
\newcommand{\N}{\mathbb N}
\newcommand{\dprod}[1]{#1^2_\Delta}
\DeclareMathOperator{\conv}{conv}
\DeclareMathOperator{\id}{id}
\DeclareMathOperator{\sgn}{sgn}
\theoremstyle{plain}
\newtheorem{thm}{Theorem}[section]
\newtheorem{lemma}[thm]{Lemma}
\newtheorem{lemdef}[thm]{Lemma and Definition}
\newtheorem{cor}[thm]{Corollary}
\newtheorem{prop}[thm]{Proposition}
\newtheorem{sub}[thm]{Subsystem}
\theoremstyle{definition}
\newtheorem{definition}[thm]{Definition}
\theoremstyle{remark}
\newtheorem{remark}[thm]{Remark}
\newtheorem{remark*}{Remark}
\newtheorem{notation*}{Notation}
\title{Necessary Conditions for Geometric \\ Realizability of Simplicial Complexes
\index{geometric realizability}}
\author{Dagmar Timmreck}
\address{
Inst. Math., MA 6-2 \\
TU Berlin\\
Germany}
\email{\tt timmreck@math.tu-berlin.de}
\date{\today}
\begin{document}
\label{Timmreck:start}
\begin{abstract}
We associate with any simplicial complex $\K$ and any integer $m$ 
a system of linear equations and inequalities. If $\K$ has a
simplicial embedding in $\R^m$ then the system has an integer
solution.
This result extends the work of I.~Novik (2000). 
\end{abstract}

\maketitle

\section{Introduction}
In general, it is difficult to prove for
a simplicial complex $\K$ that it does \emph{not} have a simplicial embedding
(or not even a simplicial immersion) into $\R^m$. 

For example, the question whether
any neighborly simplicial surface 
on $n\ge12$ vertices can be realized in $\R^3$
leads to problems of this type.
Specifically, Amos Altshuler \cite{altshuler1996} has enumerated that
there are 59 combinatorial types of neighborly
simplicial $2$-manifolds of genus~$6$. Bokowski \& Guedes
de Oliveira \cite{bokowski2000} have employed oriented matroid enumeration methods
to show that one specific instance, number~54 from Altshuler's list,
does not have a simplicial embedding \index{simplicial embedding}; the other 58 cases were shown not to have 
simplicial embeddings only recently by L.~Schewe \cite{schewe2005}.

For \emph{piecewise linear} non-embeddability
proofs there is a classical set-up via obstruction classes, due to Shapiro \cite{shapiro1957} 
and Wu \cite{wu1965}.
In 2000, I. Novik \cite{novik2000}
has refined these obstructions for simplicial
embeddability:
She showed that if a \emph{simplicial embedding} of $\K$ in $\R^m$
exists, then a certain polytope in the cochain space
$C^m(\dprod{\K};\R)$ must contain an integral point.
Thus, infeasibility of a certain integer program 
might prove that a complex $K$ has no geometric realization.

In the following, we present Novik's approach (cf. parts \ref{mt:sym} and \ref{mt:coboundary}
 of Theorem \ref{thm:system}) 
in a reorganized way, 
so that we can work out more details, which allow us to sharpen some inequalities defining the
 polytope in $C^m(\dprod{\K};\R)$ (cf. Theorem \ref{thm:system}.\ref{mt:bounds}).
Further we interpret this polytope as a projection of a 
polytope in $C^m(\dprod{\S};\R)$, where $\S$ denotes the simplicial complex consisting of all faces of 
the $N$-simplex. The latter polytope is easier to analyze. This set-up is the right 
framework to work out the relations between variables (cf. Theorem \ref{thm:system}.\ref{mt:deformation}) 
and to express linking numbers (cf. Theorem \ref{thm:system}.\ref{mt:linking}), 
which are intersection numbers of cycles and empty simplices of $\K$ (which are present in $\S$ and
 therefore need no extra treatment.)
Using the extensions based on linking numbers we can show for 
a first example (Brehm's  triangulated M\"obius strip \cite{brehm1983})
that it is not simplicially embeddable in $\R^3$.

\section{A Quick Walk-Through}

Let $\K$ be a finite (abstract) simplicial complex
on the vertex set $V$, and fix a geometric realization $|\K|$ in some Euclidean space.
 % which we identify with $[n]:=\{1,2,\dots,n\}$,
Further let $f:V\to\R^m$ be any general position map
(that is, such that any $m+1$ points from~$V$
are mapped to affinely independent points in~$\R^m$).
Any such general position map extends affinely on 
every simplex to a \emph{simplicial map}\index{simplicial map}  $f: |\K| \to \R^m$ 
which we also denote by $f$. Such a simplicial map is a special case of a
piecewise linear map.

Every piecewise linear general position map $f$ defines an \emph{intersection cocycle}
\begin{equation}
  \label{eq:intersection_cochain}
\varphi_f\ \in\ C^m( \dprod{\K};\Z ).
\end{equation}
Here $\dprod{\K}$ denotes the \emph{deleted product}\index{deleted product} complex,
which consists of all faces $\sigma_1\times\sigma_2$
of the product $\K \times \K$ such that $\sigma_1$ and~$\sigma_2$ are disjoint
simplices (in~$\K$). As the deleted product is a polytopal complex we have the usual notions 
of homology and cohomology. For a detailed treatment of the deleted product complex we refer to 
\cite{matousek2003}.

The values of the intersection cocycle are given by
\[
\varphi_f(\sigma_1\times\sigma_2)\ =\ (-1)^{\dim \sigma_1}\I\big(f(\sigma_1),f(\sigma_2)\big),
\]
where $\I$ denotes the signed intersection number of the oriented simplicial 
chains $f(\sigma_1)$ and $f(\sigma_2)$ of complementary dimensions in~$\R^m$.
These intersection numbers (and thus the values of the
intersection cocycle) have the following key properties:
\begin{compactenum}[1.~]
  \item In the case of a simplicial map, all values
          $(-1)^{\dim \sigma_1}\I\big(f(\sigma_1),f(\sigma_2)\big)$ are $\pm1$ or~$0$.
          (In the greater generality of piece\-wise linear general position maps
          {$f:~\K\to~\R^m$,} as considered by Shapiro and  by Wu, $\I\big(f(\sigma_1),f(\sigma_2)\big)$ 
          is an integer.) 
  \item If $f$ is an embedding, then $\I\big(f(\sigma_1),f(\sigma_2)\big)=0$
          holds for any two disjoint simplices $\sigma_1,\sigma_2\in\K$.
  \item In the case of the ``cyclic map'' which maps $V$
          to the monomial curve of order $m$ (the ``moment curve''),
          the coefficients $(-1)^{\dim \sigma_1}\I\big(f(\sigma_1),f(\sigma_2)\big)$
          are given combinatorially. 
\end{compactenum}
The intersection cocycle is of interest since it defines
a cohomology class $\Phi_\K=[\varphi_f]$
that does not depend on the specific map $f$.
Thus, if some piecewise linear map $f$ is
an embedding, then $\Phi_\K$ is zero.

But a simplicial embedding is a special case of a piecewise linear embedding.
So the information $\Phi_\K$ is not 
strong enough to establish simplicial non-embeddability for complexes
that admit a piecewise linear embedding --- such as, for example, 
orientable closed surfaces in $\R^3$.

According to Novik we should therefore
study the specific coboundaries $\delta\lambda_{f,c}$ that establish
equivalence between different intersection cocycles.

So, \textbf{Novik's Ansatz} is to consider
\begin{equation}
  \label{eq:basic}
\fbox{$\displaystyle\quad\varphi_f-\varphi_c\ =\ \delta\lambda_{f,c}\quad$}
\end{equation}
where
\begin{compactitem}[$\bullet$~]
\item $\varphi_f\in C^m(\dprod{\K};\Z)$ is an integral
  vector, representing the intersection cocycle of a hypothetical
  embedding $f:\K\to\R^m$, so $\varphi_f \equiv 0$. (i.e. for every pair  $\sigma_1,\sigma_2 \in \K$
  of disjoint simplices, that $\varphi_f(\sigma_1\times\sigma_2)=0$),
\item $\varphi_c\in C^m(\dprod{\K};\Z)$ is an integral
  vector, whose coefficients $\varphi_c(\sigma_1\times\sigma_2)$ are known explicitly, representing
  the intersection cochain of the cyclic map $c:\K\to\R^m$,
\item $\delta$ is a known integral matrix with entries from $\{1,-1,0\}$
  that represents the coboundary map $\delta:C^{m-1}(\dprod{\K};\Z)\to C^m(\dprod{\K};\Z)$,
   and finally
\item  $\lambda_{f,c}\in C^{m-1}(\dprod{\K};\Z)$
  is an integral vector, representing the \emph{deformation co\-chain},
  whose coefficients are determined by $f$ and $c$, via  
\[
  \lambda_{f,c}(\tau_1\times\tau_2)\ =\ 
  \I\big( h_{f,c}(\tau_1\times I),h_{f,c}(\tau_2\times I)\big),
\]         
  where $h_{f,c}(x,t)=t f(x)+(1-t)c(x)$ interpolates between $f$
  and~$c$, for $t\in I:=[0,1]$.
\end{compactitem}
Thus if $\K$ has a simplicial embedding, then 
the linear system (\ref{eq:basic}) in the unknown vector
$\lambda_{f,g}$ has an \emph{integral} solution.
Moreover, Novik derived explicit bounds on the coefficients
of $\lambda_{f,g}$, that is, on the signed
intersection numbers between the parametrised
surfaces $h_{f,g}(\tau_1\times I)$ and $h_{f,g}(\tau_2\times I)$.

The intersection cocycles and deformation cochains induced by the 
general position maps $f,g: V \to \R^m$ on \emph{different} simplicial 
complexes $\K$ and $\tilde{\K}$ on the \emph{same} vertex set $V$ coincide on $\dprod{\K} 
\cap \dprod{\tilde{\K}}$. They are projections of the same intersection cocycle
or deformation cochain on $\dprod{\S}$, where $\S$ denotes the full face 
lattice of the simplex with vertex set $V$. 
We therefore investigate these largest cochains and get Novik's results back as well 
as some stronger results even in the original setting;
see Theorem~\ref{thm:properties} and Remark~\ref{remark:novik}.

In the following, we 
\begin{compactitem}[$\bullet$~]
  \item derive the validity of the basic equation (\ref{eq:basic}), in
    Section~\ref{section:obstruction_theory},
  \item examine deformation cochains induced by general position maps on the vertex set in Section~\ref{section:pl_vs_geo}, and
  \item exhibit an obstruction system to geometric realizability in Section~\ref{section:extensions}.
\end{compactitem}
Furthermore, in Section~\ref{section:experiments} we discuss subsystems and report about computational results.

\section{Obstruction Theory}
\label{section:obstruction_theory}
We state and prove the results of this section for
\emph{simplicial} maps only. They hold in the more
general framework of \emph{piecewise linear} maps as well. For proofs and 
further details in this general setting we refer to Wu
\cite{wu1965}.

\subsection{Intersections of Simplices and Simplicial Chains}
\label{section:intersection_simplex}

\begin{definition}
Let $\sigma$ and $\tau$ be affine simplices of complementary dimensions $k+\ell
= m$ in $\R^m$ 
with vertices $\sigma_0, \ldots, \sigma_k$ and
$\tau_0, \ldots, \tau_\ell$ respectively, of complementary dimensions $k+\ell
= m$. Suppose  that $\sigma_0, \ldots, \sigma_k, \tau_0, \ldots, \tau_\ell$ are in
general position and the simplices are oriented according to the
increasing order of the indices. Then $\sigma$ and $\tau$ intersect in
at most one point. The \emph{intersection number}\index{intersection number!of simplices} $\I\big(\sigma,
\tau\big)$ is defined to be zero if $\sigma$ and $\tau$ don't intersect
and $\pm 1$ according to the orientation of the full dimensional
simplex $(p, \sigma_1, \ldots, \sigma_k, \tau_1, \ldots, \tau_\ell)$
if $\sigma$ and $\tau$ intersect in $p$. This definition extends
bilinearly to simplicial chains in $\R^m$. (We consider integral chains, that is, 
formal combinations of affine simplices in $\R^m$ with integer coefficients.)
\end{definition}
\begin{lemma}
Let $x$, $y$ be simplicial chains in $\R^m$ with $\dim x = k$ and
$\dim~y~=~\ell$.
\begin{enumerate}[\rm (a)]
\item If $k+\ell = m$ then $\I(x, y) = (-1)^{k\ell} \I(y, x)$.
\item If $k+\ell = m+1$ then $\I(\partial x, y) = (-1)^{k} \I(x, \partial y)$.
\end{enumerate}
\end{lemma}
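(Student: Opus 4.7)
By bilinearity of $\I$ both claims reduce to the case of single oriented simplices $\sigma=[\sigma_0,\ldots,\sigma_k]$ and $\tau=[\tau_0,\ldots,\tau_\ell]$ whose combined vertex set is in general position. Part~(a) is then an immediate permutation-sign calculation: if $\sigma\cap\tau=\emptyset$ both sides vanish, and otherwise $k+\ell=m$ forces the simplices to meet in a single interior point $p$, so that
\[
\I(\sigma,\tau)\ =\ \sgn\det(\sigma_1-p,\ldots,\sigma_k-p,\tau_1-p,\ldots,\tau_\ell-p),
\]
while $\I(\tau,\sigma)$ is the same determinant with the $\sigma$-column block and the $\tau$-column block swapped. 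The block-swap permutation has sign $(-1)^{k\ell}$, which is (a).

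Part~(b) requires more care. Here $k+\ell=m+1$, so $\mathrm{aff}(\sigma)\cap\mathrm{aff}(\tau)$ is a line $L$ and $\sigma\cap\tau$ is either empty (in which case both sides trivially vanish) or a line segment $S\subset L$. By general position each endpoint of $S$ lies in the relative interior of a unique facet of exactly one of $\sigma$ or $\tau$, and so contributes a single non-zero summand to either $\I(\partial\sigma,\tau)=\sum_i(-1)^i\I(\sigma^{(i)},\tau)$ or $\I(\sigma,\partial\tau)=\sum_j(-1)^j\I(\sigma,\tau^{(j)})$. I would fix an orientation on $L$, orient $S$ accordingly, and prove that the $\pm 1$ coefficient of each endpoint $r$ in $\partial S$ equals its contribution to $\I(\partial\sigma,\tau)-(-1)^k\I(\sigma,\partial\tau)$. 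Since $\partial S$ has total coefficient $0$, the claim follows.

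The main obstacle is the sign bookkeeping in that last step. When $r\in\sigma^{(i)}\cap\tau$, the $m\times m$ determinant defining $\I(\sigma^{(i)},\tau)$ at $r$ is obtained from the $m\times(m+1)$ matrix with columns $\sigma_1-r,\ldots,\sigma_k-r,\tau_1-r,\ldots,\tau_\ell-r$ by deleting the column $\sigma_i-r$ (and analogously for $r\in\tau^{(j)}$); one checks that the facet sign $(-1)^i$ combines with the orientation of $L$ at $r$ to give exactly the signed coefficient of $r$ in $\partial S$. For an endpoint on the $\tau$-side, the same construction produces an extra factor of $(-1)^k$ from sliding the deleted column past the $k$ columns of the $\sigma$-block; this is precisely the sign in the statement. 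The boundary cases $i=0$ and $j=0$, where the ``first-vertex'' convention of the definition of $\I$ drops a different vertex, require a separate routine verification but cause no additional trouble.
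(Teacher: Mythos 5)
The paper itself states this lemma without proof (it is classical background; the paper defers such facts to Wu's framework), so there is no in-paper argument to compare against and your proposal must stand on its own. Part (a) does: reducing by bilinearity to a single transversal pair, writing $\I(\sigma,\tau)$ as the sign of the determinant with the $\sigma$-block preceding the $\tau$-block at the intersection point, and swapping the blocks at the cost of $(-1)^{k\ell}$ is a complete and correct argument, and it matches the paper's definition of $\I$ as the orientation of the simplex $(p,\sigma_1,\ldots,\sigma_k,\tau_1,\ldots,\tau_\ell)$. Part (b) has the right architecture as well: under general position $\sigma\cap\tau$ is a segment $S$, each endpoint lies in the relative interior of exactly one facet of exactly one of the two simplices, these endpoints are in bijection with the nonzero summands of $\I(\del\sigma,\tau)$ and $\I(\sigma,\del\tau)$, and the identity follows once each endpoint's signed contribution is identified with its coefficient in $\del S$. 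This is the standard endpoint-pairing proof.

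The weakness is that this last identification is the \emph{entire} content of (b), and you leave it at ``one checks.'' Moreover, the heuristic you give for the factor $(-1)^k$ is not right as stated: for an endpoint $r\in\sigma\cap\tau^{(j)}$ with $j\ge 1$, the determinant defining $\I_r(\sigma,\tau^{(j)})$ is the matrix with columns $\binom{1}{r},\binom{1}{\sigma_1},\ldots,\binom{1}{\sigma_k},\binom{1}{\tau_1},\ldots,\binom{1}{\tau_\ell}$ with the column $\binom{1}{\tau_j}$ deleted \emph{in place}, so no column is slid past the $\sigma$-block at that stage. The $(-1)^k$ only emerges when the two kinds of endpoint contributions are expressed against a common reference (for instance via the direction of the line $L$ and a Laplace-type expansion), where the position of the deleted column --- $i$ for a $\sigma$-facet versus $k+j$ for a $\tau$-facet --- enters through its parity, and combining this with the facet signs $(-1)^i$, $(-1)^j$ leaves exactly the residual $(-1)^k$ together with the $\pm 1$ recording which endpoint of $S$ one is at. Carrying out that computation (including the $i=0$, $j=0$ cases you rightly flag, where the first-vertex convention removes a different column) is what turns your plan into a proof; a quick check in $\R^1$ with two overlapping segments confirms that the convention of the paper does yield $\I(\del x,y)=(-1)^k\I(x,\del y)$, so the target sign is right and the outline, once the bookkeeping is done honestly, will close.
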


Now we use intersection numbers to associate a cocycle to each general
position map.

\begin{lemdef}
Let $f: \langle N \rangle \to \R^m$ be a general position map.
The cochain
defined by
 \[
\varphi_f (\sigma_1 \times \sigma_2) := (-1)^{\dim \sigma_1} \I
\big(f(\sigma_1), f(\sigma_2)\big) \qquad \mbox{for } m \mbox{-cells }  \sigma_1
\times \sigma_2 \in \dprod{\K} 
\]
is a cocycle. It is called the 
\emph{intersection cocycle} of $f$.

\end{lemdef}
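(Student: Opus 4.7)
The plan is to verify directly that $\delta \varphi_f$ vanishes on every $(m{+}1)$-cell of $\dprod{\K}$, using the cellular product-boundary formula together with part~(b) of the preceding lemma.

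First I fix an $(m{+}1)$-cell $\tau_1 \times \tau_2 \in \dprod{\K}$ with $k := \dim \tau_1$ and $\ell := \dim \tau_2$, so that $k+\ell = m+1$. Since $\tau_1$ and $\tau_2$ are vertex-disjoint, every face of $\tau_1$ is disjoint from every face of $\tau_2$, and hence
\[
  \partial(\tau_1 \times \tau_2) \;=\; (\partial \tau_1) \times \tau_2 \;+\; (-1)^k\, \tau_1 \times (\partial \tau_2)
\]
is a chain in $\dprod{\K}$, so pairing $\varphi_f$ with this boundary is legitimate and equals $\delta\varphi_f(\tau_1 \times \tau_2)$.

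Next I expand $\varphi_f$ on each summand, using that $f$ is simplicial (so $f(\partial\tau_i) = \partial f(\tau_i)$) and that $\I$ is bilinear. This gives
\[
  \varphi_f\!\big((\partial\tau_1) \times \tau_2\big) \;=\; (-1)^{k-1}\, \I\big(\partial f(\tau_1),\, f(\tau_2)\big),
\]
\[
  (-1)^k\, \varphi_f\!\big(\tau_1 \times (\partial\tau_2)\big) \;=\; (-1)^{2k}\, \I\big(f(\tau_1),\, \partial f(\tau_2)\big) \;=\; \I\big(f(\tau_1),\, \partial f(\tau_2)\big).
\]
Part~(b) of the preceding lemma, applied with $x = f(\tau_1)$ (of dimension $k$) and $y = f(\tau_2)$, yields $\I(\partial f(\tau_1), f(\tau_2)) = (-1)^k\, \I(f(\tau_1), \partial f(\tau_2))$; substituting back collapses the first summand to $-\I(f(\tau_1), \partial f(\tau_2))$, which exactly cancels the second.

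The only real obstacle is sign bookkeeping. General position keeps all intersection numbers well-defined, since $\partial$ preserves complementary dimensions, and passage to the deleted product introduces no subtlety, as disjointness is inherited by faces.
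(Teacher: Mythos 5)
Your argument is correct, and it is exactly the standard verification; note that the paper itself gives no proof of this Lemma and Definition (it defers the general piecewise linear case to Wu), so there is nothing to deviate from. Your sign conventions match those the paper uses implicitly elsewhere: in the Remark following Lemma \ref{eq:forbidden} one reads off $\delta\lambda(\sigma\times\tau)=\lambda(\partial\sigma\times\tau)+(-1)^{\dim\sigma}\lambda(\sigma\times\partial\tau)$, i.e.\ $\delta\varphi_f=\varphi_f\circ\partial$ with the product-boundary formula you used, and your cancellation
\[
(-1)^{k-1}\,\I\big(\partial f(\tau_1),f(\tau_2)\big)+(-1)^{2k}\,\I\big(f(\tau_1),\partial f(\tau_2)\big)=0
\]
via part (b) of the preceding lemma is the same boundary-manipulation technique the paper employs in its proof of Proposition \ref{prop:fundamental}. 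Two small points you could make explicit: the identification $f(\partial\tau_i)=\partial f(\tau_i)$ holds because $f$ is affine on each simplex and the induced chain map commutes with $\partial$; and the general position hypothesis on $f$ is what guarantees that each intersection number $\I\big(f(\tau_1^i),f(\tau_2)\big)$ appearing in the expansion is defined, since the relevant $m+2$ image points (vertices of a facet of $\tau_1$ together with those of $\tau_2$, which are disjoint simplices) are in general position. Neither affects the validity of your proof.
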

The intersection cocycle has the following symmetries. For every
$m$-cell $\sigma_1 \times \sigma_2$, with $\dim \sigma_1 = k$ and
$\dim \sigma_2 = \ell$,
\begin{equation*}
\varphi_f (\sigma_1 \times \sigma_2) 
= (-1)^{(k+1)(\ell+1)+1}\varphi_f (\sigma_2 \times \sigma_1).
\end{equation*}

\begin{remark} Wu calls this cocycle \emph{imbedding cocyle} \cite[p.183]{wu1965}. 
If $f$ is a piecewise linear embedding, then $\varphi_f = 0$.
When we look at simplicial maps we even have an equivalence: 
A simplicial map $f$ is an embedding of $\K$ if and only if the
intersection cocycle is $0$. So $\varphi_f$ measures the
deviation of $f$ from a geometric realization. This makes the intersection 
cocycle quite powerful.
\end{remark}

\subsection{Intersections of Parametrized Surfaces}
\label{section:intersection_surface}
In this section we sort out definitions, fix orientations 
and establish the fundamental relation in Proposition \ref{prop:fundamental} (cf. \cite[pp. 180 and 183]{wu1965}).
Wu uses a simplicial homology between two different piecewise linear maps to establish 
the independence of the homology class of the particular piecewise linear map. We use  
a straight line homotopy instead. 
  
\begin{definition}
Let $U \subset \R^k$ and $V \subset \R^\ell$ be sets that are closures of their interiors 
and $\varphi : U \to \R^m$ and $\psi : V \to \R^m$ smooth parametrized surfaces.
The surfaces $\varphi$ and $\psi$ \emph{intersect transversally} at $p =
\varphi(\alpha) = \psi (\beta)$ with $\alpha \in \overset{\circ}{U}$ and $\beta
\in \overset{\circ}{V} $, if 
\[
T_p\R^m = d\varphi(T_\alpha U) \oplus d\psi(T_\beta V)
\]
In other words $k + \ell = m$ and the vectors 
\[
\left.\frac{\partial \varphi}{\partial u_1}\right|_\alpha , \ldots, 
\left.\frac{\partial \varphi}{\partial u_k}\right|_\alpha ,
\left.\frac{\partial \psi}{\partial v_1}\right|_\beta , \ldots, 
\left.\frac{\partial \psi}{\partial v_\ell}\right|_\beta 
\]
span $\R^m$.
In this situation the index of intersection of $\varphi$ and $\psi$ in
$p$ is defined by 
\[
\I_p(\varphi, \psi) \ \ := \ \  \sgn \det \left(
\left.\frac{\partial \varphi}{\partial u_1}\right|_\alpha , \ldots, 
\left.\frac{\partial \varphi}{\partial u_k}\right|_\alpha ,
\left.\frac{\partial \psi}{\partial v_1}\right|_\beta , \ldots, 
\left.\frac{\partial \psi}{\partial v_\ell}\right|_\beta
\right). 
\]
The surfaces $\varphi$ and $\psi$ are in \emph{general position} if they
intersect transversally only. In particular there are no intersections
at the boundary. Surfaces in general position intersect in finitely many points only and the
\emph{intersection number} \index{intersection number!of parametrized surfaces} is defined by
\[
\I(\varphi, \psi)\ \ 
:= \sum_{p = \varphi(\alpha) = \psi(\beta)} \I_p(\varphi, \psi).
\]
We also write $\I\big(\varphi(U), \psi(V)\big)$ for $\I(\varphi, \psi)$ when we want to 
emphasize the fact that the images intersect.
\end{definition}

We now give parametrizations of simplices so that the two definitions coincide.
\begin{notation*}
Denote by $(e_1, \ldots, e_m)$ the standard basis of $\R^m$ and let
$e_0 := \zerovec$. Further let  $[m] := \{1, \ldots, m\}$, $\langle m
\rangle := [m] \cup \{0\}$ and for $I \subseteq \langle m \rangle$ let
$\simp_I$ denote the simplex $\conv\{e_i\mid i\in I\}$.
Finally let $J = \{ j_0, \ldots, j_k\}_<$ denote the set $\{ j_0, \ldots, j_k\}$ with $j_0<\ldots< j_k$.
\end{notation*}
For a simplex $\sigma = \conv\{\sigma_0, \ldots , \sigma_k\}$ the parametrization
$\varphi_\sigma : \R^k \supset \Delta_{[k]} \to \sigma \subset \R^m,
(u_1, \ldots, u_k) \mapsto \sigma_0 + \sum_{i=1}^k u_i (\sigma_i -
\sigma_0)$ induces the orientation corresponding to the increasing 
order of the indices.
Now consider two simplices $\sigma = \conv\{\sigma_0, \ldots , \sigma_k\}$ and 
$\tau = \conv\{\tau_0, \ldots , \tau_\ell\}$.
If $\{\sigma_0, \ldots , \sigma_k,\tau_0, \ldots , \tau_\ell\}$ is 
in general position then also $\varphi_\sigma$ and $\varphi_\tau$ are
in general position. Let $\sigma$ and $\tau$ intersect in 
\begin{eqnarray*}
p &=& \sum_{i=0}^k \alpha_i \sigma_i =
\sigma_0 + \sum_{i=1}^k \alpha_i (\sigma_i - \sigma_0) =
\varphi_\sigma(\alpha) \\
&=&  \sum_{i=0}^\ell \beta_i \tau_i = 
\tau_0 + \sum_{i=1}^\ell \beta_i (\tau_i -\tau_0)=
\varphi_\tau(\beta).
\end{eqnarray*}
Then we have by a straightforward calculation:
\begin{eqnarray*}
\I(\sigma, \tau) &=& \sgn \det \left( \binom{1}{p}, 
                              \binom{1}{\sigma_1}, \ldots, \binom{1}{\sigma_k}, 
                              \binom{1}{\tau_1}, \ldots, \binom{1}{\tau_\ell} \right)\\        
&=& \sgn \det \left(
\left.\frac{\partial \varphi_\sigma}{\partial u_1}\right|_\alpha , \ldots, 
\left.\frac{\partial \varphi_\sigma}{\partial u_k}\right|_\alpha ,
\left.\frac{\partial \varphi_\tau}{\partial v_1}\right|_\beta , \ldots, 
\left.\frac{\partial \varphi_\tau}{\partial v_\ell}\right|_\beta
\right)\\
&=& \I(\varphi_\sigma, \varphi_\tau)\ .
\end{eqnarray*}

In the following we use the parametrization
$\varphi_{|J|} \times \id$ that induces the product orientation on  $|J|\times \R$.

\begin{definition}
\label{lambda}
Let $f, g : \langle N \rangle \to \R^m$ be two general position maps such that
$\{f(i): i \in \langle N \rangle \} \cup \{g(i): i \in \langle N \rangle \}$ 
is in general position, where 
$f(i) = g(j)$ is permitted only if $i = j$.
Define the
\emph{deformation map}
\[h_{f,g} :\  |\K| \times \R \ \ \to \ \ \R^m \times \R
\]
\[ 
h_{f,g} (x,t)\ \  :=\ \  (tf(x)+(1-t)g(x), t).
\]
and the 
\emph{deformation cochain}\index{deformation cochain} $\lambda_{f,g} \in \mathcal{C}^{m-1}
(\dprod{\K})$ of $f$  and $g$ by
\[
\lambda_{f,g} (\tau_1 \times
\tau_2) \ \ := \ \ \I \big(h_{f,g} (|\tau_1| \times [0,1]), h_{f,g}
(|\tau_2| \times [0,1])\big) \]
for $(m-1)$-cells $\tau_1
\times \tau_2 \in \dprod{\K}$.

\end{definition}
\begin{prop}
\label{prop:fundamental}
The cohomology class of $\varphi_f$ is independent of the general position map $f$:
For two general position maps $f$ and $g$ we have 
\[
\delta \lambda_{f,g} \ \ = \ \ \varphi_f - \varphi_g.
\]
Therefore the cohomology class $\Phi_\K 
:= [\varphi_f] \in H^m(\dprod{\K};\Z)$ is an invariant of the complex $\K$ itself.
\end{prop}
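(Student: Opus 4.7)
The plan is to prove the cellwise identity $\delta\lambda_{f,g}(\sigma_1\times\sigma_2)=\varphi_f(\sigma_1\times\sigma_2)-\varphi_g(\sigma_1\times\sigma_2)$ for each $m$-cell of the deleted product by a Stokes-type argument in ambient $\R^{m+1}$, and then to read off the invariance of the cohomology class as a formal consequence. Fix $\sigma_1\times\sigma_2\in\dprod{\K}$ with $k=\dim\sigma_1$, $\ell=\dim\sigma_2$, $k+\ell=m$, and introduce the parametrized surfaces
\[
X\ :=\ h_{f,g}(|\sigma_1|\times[0,1]),\qquad Y\ :=\ h_{f,g}(|\sigma_2|\times[0,1])
\]
in $\R^m\times\R\cong\R^{m+1}$. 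Their dimensions $k+1$ and $\ell+1$ sum to $m+2=(m+1)+1$, which is exactly the setting in which part (b) of the preceding lemma applies in ambient $\R^{m+1}$ and yields $\I(\partial X,Y)=(-1)^{k+1}\I(X,\partial Y)$. By the product boundary rule,
\[
\partial X\ =\ h_{f,g}(\partial\sigma_1\times I)\ +\ (-1)^{k}\bigl(h_{f,g}(\sigma_1\times\{1\})-h_{f,g}(\sigma_1\times\{0\})\bigr),
\]
so $\partial X$ splits into a vertical side wall and two horizontal caps at $t=1$ and $t=0$, on which $h_{f,g}$ restricts to $f$ and to $g$ respectively; the analogous decomposition holds for $\partial Y$.

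Next I unfold the coboundary
\[
\delta\lambda_{f,g}(\sigma_1\times\sigma_2)\ =\ \lambda_{f,g}(\partial\sigma_1\times\sigma_2)\ +\ (-1)^{k}\lambda_{f,g}(\sigma_1\times\partial\sigma_2)
\]
into intersection numbers and substitute the two boundary decompositions. The side-wall contributions combine into $\I(\partial X,Y)+(-1)^{k}\I(X,\partial Y)$, which vanishes by the lemma, leaving only four horizontal-cap terms. At $t=1$ the chain $h_{f,g}(\sigma_1\times\{1\})$ equals $f(\sigma_1)\times\{1\}$, and the only tangent vector among the combined frame with non-zero $t$-component is $\partial_t h_{f,g}=(f-g,1)$; expanding the defining determinant along the last coordinate collapses it to a determinant of vectors in $\R^m$, giving $\I(h_{f,g}(\sigma_1\times\{1\}),Y)=\I(f(\sigma_1),f(\sigma_2))$. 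Three symmetric computations handle the remaining caps, and collecting the signs $(-1)^{k}$, $(-1)^{\ell}$, $(-1)^{m+k}$ coming from the product boundary rule and the determinant expansions reproduces exactly $\varphi_f(\sigma_1\times\sigma_2)-\varphi_g(\sigma_1\times\sigma_2)$. Invariance of $\Phi_\K$ is then immediate: $\varphi_f$ and $\varphi_g$ differ by a coboundary, so $[\varphi_f]=[\varphi_g]$ in $H^m(\dprod{\K};\Z)$.

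The main obstacle lies in two places. First, four independent sign sources --- the convention $(-1)^{\dim\sigma_1}$ baked into $\varphi$, the product boundary rule, the sign $\partial I=\{1\}-\{0\}$, and the $(-1)^{\dim X}$ in the lemma --- must be tracked simultaneously so that the result lands on $\varphi_f-\varphi_g$ and not some multiple or its negative. Second, each horizontal-cap intersection point lies simultaneously on $\partial X$ and on $\partial Y$, so strict general position of $\partial X$ with $Y$ fails at those corners; I would resolve this either by an infinitesimal $t$-perturbation that moves one cap off the other's boundary, or by a direct verification that the cap determinants assign a well-defined sign to each corner. A concluding remark handles the general case when $\{f(i)\}\cup\{g(i)\}$ violates the joint general-position hypothesis, by perturbing $g$ to some $g'$ satisfying it, applying the identity to $(f,g')$ and to $(g',g)$, and adding the resulting coboundaries.
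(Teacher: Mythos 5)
Your overall route is the same as the paper's: evaluate $\delta\lambda_{f,g}$ on an $m$-cell $\sigma_1\times\sigma_2$, trade the side walls $h_{f,g}(\partial\sigma_1\times I)$, $h_{f,g}(\sigma_1\times\partial\sigma_2\times I)$ for the full boundaries of the ruled surfaces $X,Y$ via the product boundary formula, kill the $\I(\partial X,Y)$-versus-$\I(X,\partial Y)$ combination with part (b) of the intersection lemma in ambient $\R^{m+1}$, and read off $\varphi_f-\varphi_g$ from the horizontal caps. The difference is in the decisive bookkeeping, and as written your accounting does not close. Every corner point $(p,1)$ with $p\in f(\sigma_1)\cap f(\sigma_2)$ lies on a cap of $X$ \emph{and} on a cap of $Y$, so if you evaluate all four horizontal-cap terms as full intersection numbers (your ``three symmetric computations''), each such point is counted twice and the signed sum comes out as $2(\varphi_f-\varphi_g)$ (or $0$, depending on sign choices), not $\varphi_f-\varphi_g$. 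Under any consistent resolution of the corners, exactly two of the four cap terms survive. This is precisely what the paper's device achieves: it extends only one surface, replacing $h(\tau\times[0,1])$ by $h(\tau\times[-\eps,1+\eps])$, so that the caps of $X$ at $t=0,1$ meet the extended surface in \emph{interior} points (yielding $\I(g(\sigma),g(\tau))$ and $\I(f(\sigma),f(\tau))$), while the caps of the extended surface sit at $t=-\eps$ and $t=1+\eps$ and miss $X$ entirely, contributing $0$.

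The second, related point is that the corner degeneracy does not only make the cap terms ill-defined; it also breaks the hypothesis of the lemma you invoke, since $\I(\partial X,Y)=(-1)^{\dim X}\I(X,\partial Y)$ requires the intersections to be transversal interior ones, which fails when $\partial X$ meets $\partial Y$. So the $t$-perturbation you mention in passing must be performed \emph{before} applying the lemma, and asymmetrically (extend one factor only); that is exactly the order of operations in the paper's proof, and once you adopt it your computation collapses into theirs, with two cap terms rather than four. In short: right strategy, and you correctly identified the corner problem and the right kind of fix, but the proposal defers the sign-and-corner bookkeeping that \emph{is} the proof, and the four-cap tally you describe would, if carried out literally, land on twice the intended coboundary.
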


\begin{proof}
Let $\sigma \times \tau \in \dprod{\K}$, $\dim \sigma
\times \tau = m$. In the following we omit the index $f,g$ from
$\lambda_{f,g}$ and $h_{f,g}$.
We get the boundary of $h(\sigma \times [0,1])$ by taking the boundary
first and then applying $h$. The intersections $h(\partial \sigma
\times [0,1]) \cap h(\tau \times [0,1])$ are inner intersections. We
extend the surface patch $h(\tau \times [0,1])$ to $h(\tau \times
[-\varepsilon,1+\varepsilon])$ so that the intersections $h(\sigma
\times \{0\}) \cap h(\tau \times \{0\})$ and $h(\sigma
\times \{1\}) \cap h(\tau \times \{1\})$ become inner intersections of
$h(\partial (\sigma \times [0,1])) \cap h(\tau \times [-\varepsilon,1+\varepsilon])$
as well but no new intersections occur.
Then we have
\begin{eqnarray*}
\lambda(\partial \sigma \times \tau) &=& \I\big(h(\partial \sigma
\times [0,1]), h(\tau \times [0,1])\big)\\
&=& \I\big(h(\partial \sigma
\times [0,1]), h(\tau \times [-\varepsilon,1+\varepsilon])\big)\\
&=& \I\big(h(\partial (\sigma \times [0,1])), h(\tau \times
[-\varepsilon,1+\varepsilon])\big)\\
&&+ (-1)^{\dim \sigma} \I\big(h(\sigma \times \{0\}), h(\tau \times
[-\varepsilon,1+\varepsilon])\big)\\
&&- (-1)^{\dim \sigma} \I\big(h(\sigma \times \{1\}), h(\tau \times
[-\varepsilon,1+\varepsilon])\big)\\
&=& (-1)^{\dim \sigma +1} \I\big(h(\sigma \times [0,1]), h(\partial(\tau \times
[-\varepsilon,1+\varepsilon])\big)\\
&&+ (-1)^{\dim \sigma} \I\big(f(\sigma), f(\tau)\big)
- (-1)^{\dim \sigma} \I\big(g(\sigma), g(\tau)\big)\\
&=&  (-1)^{\dim \sigma +1} \big[\I\big(h(\sigma \times [0,1]), h(\partial\tau \times
[-\varepsilon,1+\varepsilon])\big)\\
&&\phantom{(-1)^{\dim \sigma +1} \big[} + \I\big(h(\sigma \times [0,1]), h(\tau \times
\{-\varepsilon\})\big)\\
&&\phantom{(-1)^{\dim \sigma +1} \big[} - \I\big(h(\sigma \times [0,1]), h(\tau \times
\{1+\varepsilon\}\})\big)\big]\\
&&+ (-1)^{\dim \sigma} \I\big(f(\sigma), f(\tau)\big)
- (-1)^{\dim \sigma} \I\big(g(\sigma), g(\tau)\big)\\
&=& (-1)^{\dim \sigma +1} \lambda (\sigma \times \partial \tau)
+\varphi_f(\sigma \times \tau) -\varphi_g(\sigma \times \tau) 
\end{eqnarray*}
\end{proof}

The deformation cochain has symmetries as well:
\begin{lemma}
If $\tau_1 \times \tau_2$ is an $(m-1)$-cell of 
$\dprod{\K}$ then $\tau_2 \times \tau_1$ is also an $(m-1)$-cell of 
$\dprod{\K}$ and 
\label{lem:symmetries}
\[
\lambda_{f,g} (\tau_1 \times \tau_2) 
\ \ = \ \ (-1)^{(\dim \tau_1 +1)(\dim \tau_2 +1)} \lambda_{f,g} (\tau_2 \times \tau_1).
\]
\end{lemma}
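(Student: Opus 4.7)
My plan is to reduce the claim to the analogue, for parametrized surfaces, of part (a) of the earlier lemma on intersection numbers of simplicial chains. Concretely: if $\varphi:U\to\R^{m+1}$ and $\psi:V\to\R^{m+1}$ are smooth parametrized surfaces of dimensions $k$ and $\ell$ in general position with $k+\ell=m+1$, then
\[
\I(\varphi,\psi)\ =\ (-1)^{k\ell}\,\I(\psi,\varphi).
\]
This is immediate from the determinantal definition of the local intersection index $\I_p$: interchanging $\varphi$ and $\psi$ permutes a block of $k$ tangent-vector columns past a block of $\ell$ columns, which introduces the sign $(-1)^{k\ell}$.

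First, I would observe that the condition for $\tau_1\times\tau_2$ to be a cell of $\dprod{\K}$ is just disjointness of $\tau_1$ and $\tau_2$, which is symmetric in the two factors, and that $\dim(\tau_1\times\tau_2)=\dim(\tau_2\times\tau_1)$. So $\tau_2\times\tau_1$ is indeed an $(m-1)$-cell of $\dprod{\K}$ as well, and the right-hand side of the claimed identity makes sense.

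Next I would apply the symmetry above to the two parametrized surfaces
\[
\varphi\ :=\ h_{f,g}\circ(\varphi_{|\tau_1|}\times\id_{[0,1]}),\qquad
\psi\ :=\ h_{f,g}\circ(\varphi_{|\tau_2|}\times\id_{[0,1]}),
\]
which have dimensions $k=\dim\tau_1+1$ and $\ell=\dim\tau_2+1$ in $\R^m\times\R=\R^{m+1}$; since $\tau_1\times\tau_2$ is an $(m-1)$-cell we have $k+\ell=(m-1)+2=m+1$, so the complementary-dimension hypothesis is satisfied. The general position assumption on $f,g$ from Definition~\ref{lambda} ensures that $\varphi$ and $\psi$ are in general position (this is symmetric in the two surfaces). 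Applying the swap rule gives
\[
\lambda_{f,g}(\tau_1\times\tau_2)\ =\ (-1)^{(\dim\tau_1+1)(\dim\tau_2+1)}\,\lambda_{f,g}(\tau_2\times\tau_1).
\]

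The only point that takes a moment's care is the verification of the parametrized-surface swap rule; but this is a direct calculation with determinants, entirely parallel to the verification of part (a) of the earlier lemma for affine simplices, and I would dispatch it in a single line by invoking that column-permutation sign.
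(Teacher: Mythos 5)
Your proof is correct and follows essentially the same route as the paper: the paper's own argument is exactly the one-line application of the swap rule $\I(\varphi,\psi)=(-1)^{k\ell}\I(\psi,\varphi)$ to the surfaces $h_{f,g}(\tau_1\times[0,1])$ and $h_{f,g}(\tau_2\times[0,1])$ of dimensions $\dim\tau_1+1$ and $\dim\tau_2+1$ in $\R^{m+1}$. Your added justification of the swap rule via the block column permutation in the determinantal definition of the local index is the correct (and implicit) reason behind the paper's middle equality.
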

\begin{proof}
\begin{eqnarray*}
\lambda_{f,g} (\tau_1 \times \tau_2) &=&
    \I \big(h_{f,g} (\tau_1 \times [0,1]), h_{f,g}(\tau_2 \times
    [0,1])\big)\\
&=& (-1)^{(\dim \tau_1 +1)(\dim \tau_2 +1)} \I \big(h_{f,g} 
    (\tau_2 \times [0,1]), h_{f,g}(\tau_1 \times [0,1])\big)\\
&=& (-1)^{(\dim \tau_1 +1)(\dim \tau_2 +1)} \lambda_{f,g} (\tau_2 \times \tau_1)
\end{eqnarray*}
\end{proof}
\begin{remark} Intersection cocycle and deformation cochain can also
be defined for \emph{piecewise linear} general position maps maintaining the
same properties \cite{wu1965}. So the
cohomology class $\Phi_{\K} := [\varphi_f]\in H^m(\dprod{\K})$ where $f : |\K| \to \R^m$ is any piecewise linear
map, only serves as an obstruction to piecewise
linear embeddability. It cannot distinguish between piecewise linear
embeddability and geometric realizability.
\end{remark}

\section{Distinguishing between Simplicial Maps and P.L. Maps}
\label{section:pl_vs_geo}

In this paragraph  we collect properties of deformation cochains between \emph{simplicial} maps that 
do not necessarily hold for deformation cochains between arbitrary \emph{piecewise linear} maps.
The values of the intersection cocycles $\varphi_f$, $\varphi_g$ and the deformation cochain $\lambda_{f,g}$ of two 
simplicial maps $f$ and $g$ depend only on the values that $f$ and $g$ take on the vertex 
set $\langle N\rangle$ of the complex in question. The complex itself  
determines the products $\sigma \times \tau$ on which $\lambda_{f,g}$ may be evaluated. 
So we examine what values these cochains take on
$\mathcal{C}^{m-1}(\dprod{\S})$, where $\S$ denotes the
$m$-skeleton of the $N$-simplex. In Section \ref{section:extensions} we derive further properties for the case
that we deform into a geometric realization.

\subsection{Linking Numbers}

\begin{definition}
Let $x$, $y$ be simplicial cycles in $\R^m$, $\dim x+ \dim y = m+1$, 
with disjoint supports. As every cycle bounds in $\R^m$ we find a chain $\gamma$
such that $\partial \gamma = x$. The \emph{linking number}\index{linking number} of $x$ and $y$ is 
defined as $\L(x,y) := \I(\gamma, y)$.
\end{definition}

\begin{lemma}
\label{lemma:bounds}
Let $\sigma$, $\tau$ be affine simplices in $\R^m$. Then:
\begin{enumerate}[\rm (a)]
\item $|\I(\sigma, \tau)| \  \leq \  1$ 
      if $\dim \sigma + \dim \tau = m$.   
\item $|\L(\del \sigma, \del \tau)| \ \leq \ 1$ if $\dim
  \sigma = 2$ and $\dim \tau = m-1$.
\end{enumerate}
\end{lemma}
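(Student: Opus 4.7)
Part (a) is immediate from the definition of the intersection number: two oriented affine simplices of complementary dimensions in general position either miss each other or meet in a single point, contributing $0$ or $\pm 1$. For part (b), my plan is to reduce the linking number to a single intersection number and then control that number geometrically via the affine hyperplane spanned by $\tau$.

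The reduction is straightforward. Since $\partial\sigma$ bounds the $2$-chain $\sigma$, taking $\gamma:=\sigma$ in the definition of the linking number yields $\L(\partial\sigma,\partial\tau)=\I(\sigma,\partial\tau)$. The identity $\I(\partial x,y)=(-1)^{\dim x}\I(x,\partial y)$, applied with $x=\sigma$ and $y=\tau$ (whose dimensions sum to $m+1$), rewrites this as $(-1)^{2}\I(\partial\sigma,\tau)=\I(\partial\sigma,\tau)$. Hence it suffices to prove $|\I(\partial\sigma,\tau)|\le 1$. The naive bound from (a) is only~$3$, since $\partial\sigma$ has three edges, each meeting the simplex $\tau$ in at most one point; the real work is to show that two of those potential contributions always cancel.

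To do this I pass to the affine hyperplane $H:=\operatorname{aff}(\tau)$. By general position each of the three vertices of $\sigma$ lies strictly on one side of $H$, and an edge of $\partial\sigma$ meets $H$ iff its endpoints lie in different open half-spaces. With three vertices and two half-spaces, the number of edges meeting $H$ is either $0$ (all three vertices on the same side) or $2$ (two on one side, one on the other). In the first case $\I(\partial\sigma,\tau)=0$. In the second case, the two crossings of the $1$-cycle $\partial\sigma$ with the oriented hyperplane $H$ must carry opposite signs, because the signed intersection of any $1$-cycle with an oriented affine hyperplane in $\R^m$ vanishes. Since $\I(\partial\sigma,\tau)$ picks out exactly those of these (at most two) crossings that happen to land in the subsimplex $\tau\subset H$, its value is $-1$, $0$, or $+1$.

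The main obstacle I anticipate is orientation bookkeeping: verifying that the local sign of an edge crossing $\tau$ at a transverse point agrees with its local sign as a crossing of $H$ at the same point, and that the cycle-vanishing statement on $H$ really follows from $\partial(\partial\sigma)=0$. Once those sign-consistency checks are in place, the argument reduces to the one-line count above.
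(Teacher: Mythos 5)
Your argument is correct. Note that the paper states Lemma \ref{lemma:bounds} without proof (it is treated as elementary background for Proposition \ref{prop:intersection_inequalities}), so there is no authorial proof to compare against; your write-up is a legitimate filling-in of the gap. Part (a) is indeed immediate from the definition. For (b), your reduction $\L(\del\sigma,\del\tau)=\I(\sigma,\del\tau)=\I(\del\sigma,\tau)$ uses exactly the tools the paper supplies (the definition of $\L$ with $\gamma=\sigma$, and part (b) of the unnumbered lemma on $\I(\del x,y)$ with $(-1)^{\dim\sigma}=+1$), and the hyperplane-parity step is sound: with $H=\operatorname{aff}(\tau)$, the sign convention $\I_p(e,\tau)=\sgn\det\bigl(\tbinom{1}{p},\tbinom{1}{b},\tbinom{1}{\tau_1},\dots,\tbinom{1}{\tau_{m-1}}\bigr)$ depends only on which side of the oriented hyperplane $H$ the edge's second vertex $b$ lies, so the local sign of a crossing of $\tau$ coincides with its local sign as a crossing of $H$; and the vanishing of the total signed crossing of the $1$-cycle $\del\sigma$ with $H$ is the telescoping identity $\sum_e c_e\bigl(\chi(\mathrm{head}_e)-\chi(\mathrm{tail}_e)\bigr)=\chi(\del(\del\sigma))=0$, where $\chi$ is the indicator of the positive open half-space (general position keeps all vertices off $H$ and all crossings off $\del\tau$). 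Hence the at most two crossings carry opposite effective signs (boundary coefficients included), and restricting attention to those landing in $\tau$ gives a value in $\{-1,0,1\}$, as claimed; the orientation bookkeeping you flagged as the main risk does check out under the paper's determinant convention.
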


The next two conditions follow from the estimates in Lemma \ref{lemma:bounds}
on the intersection numbers $\varphi_f$.

\begin{prop}
\label{prop:intersection_inequalities}
Let $f$, $g$ be two general position maps of $\langle N \rangle$ into
$\R^m$.\\
Then 
\begin{enumerate}[\rm (a)]
\item $-1 -\varphi_g(\sigma \times \tau) \ \ \leq \ \  \delta
  \lambda_{f,g}(\sigma \times \tau) \ \ \leq\ \ 1 -\varphi_g(\sigma
  \times \tau)$ \\
for all $\sigma \times \tau \in
\dprod{\S}$, $\dim \sigma +\dim \tau = m$. 
\item $-1 -\varphi_g(\sigma \times \del \tau) \ \ \leq \ \ 
  \lambda_{f,g}(\del \sigma \times \del \tau) \ \ \leq\ \ 1 -\varphi_g(\sigma
  \times \del \tau)$ \\
for all $\sigma \times \tau \in
 \dprod{\S}$, $\dim \sigma = m-1$ and $\dim \tau = 2$. 
\end{enumerate}
\end{prop}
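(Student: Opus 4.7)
The plan is to deduce both inequalities from the fundamental relation $\delta\lambda_{f,g} = \varphi_f - \varphi_g$ of Proposition~\ref{prop:fundamental}, in each case isolating a single $\varphi_f$-evaluation and bounding it by~$1$ via Lemma~\ref{lemma:bounds}.

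For part~(a), the identity gives directly $\delta\lambda_{f,g}(\sigma\times\tau) = \varphi_f(\sigma\times\tau) - \varphi_g(\sigma\times\tau)$. Since $\dim\sigma+\dim\tau = m$ and $f$ is a general position map, Lemma~\ref{lemma:bounds}(a) applied to the affine simplices $f(\sigma)$ and $f(\tau)$ yields $|\varphi_f(\sigma\times\tau)| = |\I(f(\sigma),f(\tau))|\leq 1$, and the two-sided inequality follows by rearranging.

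For part~(b), the key observation is that $\lambda_{f,g}(\del\sigma\times\del\tau)$ is already the evaluation of a coboundary. Using the product boundary formula together with $\del^2\tau = 0$, one has $\del(\sigma\times\del\tau) = \del\sigma\times\del\tau + (-1)^{\dim\sigma}\sigma\times\del^2\tau = \del\sigma\times\del\tau$, so that $\lambda_{f,g}(\del\sigma\times\del\tau) = \delta\lambda_{f,g}(\sigma\times\del\tau) = \varphi_f(\sigma\times\del\tau) - \varphi_g(\sigma\times\del\tau)$. It therefore suffices to show $|\varphi_f(\sigma\times\del\tau)| \leq 1$. Unfolding the definition, $\varphi_f(\sigma\times\del\tau) = (-1)^{m-1}\I\bigl(f(\sigma),\del f(\tau)\bigr)$; and since $f(\sigma)$ is a chain with $\del f(\sigma) = f(\del\sigma)$, choosing $\gamma := f(\sigma)$ in the definition of the linking number identifies this intersection number, up to sign, with $\L\bigl(f(\del\sigma),f(\del\tau)\bigr)$. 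Lemma~\ref{lemma:bounds}(b), applied after relabeling (so that the ``2'' plays the role of $\dim\tau$), bounds this linking number in absolute value by~$1$, whence $|\varphi_f(\sigma\times\del\tau)| \leq 1$ and the asserted bounds follow.

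The only real bookkeeping issue is the matching of conventions between the proposition and Lemma~\ref{lemma:bounds}(b): the lemma is stated with $\dim\sigma = 2$, $\dim\tau = m-1$, whereas the proposition has these dimensions swapped. This is resolved by the symmetry $|\L(x,y)| = |\L(y,x)|$ for linking numbers of cycles in $\R^m$, and the accompanying sign factors do not affect the absolute-value bound.
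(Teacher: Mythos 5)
Your proof is correct and follows essentially the same route as the paper: part (a) is the fundamental relation $\delta\lambda_{f,g}=\varphi_f-\varphi_g$ combined with the bound $|\varphi_f(\sigma\times\tau)|\le 1$ from Lemma~\ref{lemma:bounds}(a), and part (b) rests on the identity $\delta\lambda_{f,g}(\sigma\times\del\tau)=\lambda_{f,g}(\del\sigma\times\del\tau)$ together with the identification $\varphi_f(\sigma\times\del\tau)=\pm\L\bigl(f(\del\sigma),f(\del\tau)\bigr)$ (via $\gamma=f(\sigma)$) and Lemma~\ref{lemma:bounds}(b). You merely spell out details the paper leaves implicit, such as the $\del^2\tau=0$ step and the swap of the roles of the $2$- and $(m-1)$-simplex in Lemma~\ref{lemma:bounds}(b), which is indeed harmless for the absolute-value bound.
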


\begin{proof}
  \begin{enumerate}[\rm (a)]
  \item  As $\varphi_f(\sigma \times \tau) = (-1)^{\dim \sigma}
         \I\big(f(\sigma), f(\tau)\big)$ we can bound $\varphi_f$ in Proposition \ref{prop:fundamental} by $|\varphi_f| \leq 1$.
  \item $\varphi_f(\sigma \times \del\tau) = (-1)^{\dim \sigma}\I\big(f(\sigma), 
        f(\del\tau)\big) = (-1)^{\dim \sigma}\L\big(f(\del\sigma), f(\del\tau)\big)$ and \\
        $\delta
        \lambda_{f,g}(\sigma \times \del\tau) = \lambda_{f,g}(\del\sigma 
        \times \del\tau)$.
  \end{enumerate}
\end{proof}

\subsection{Deforming Simplices}

\subsubsection{The Simplest Case}
\label{section:deformation}
For the following, homotopies between images of a simplicial complex under 
different general position maps play a crucial r\^ole.
In this section we look at the simplest case: The
homotopy from the standard simplex of $\R^m$ to an arbitrary one.

Let $D \in \R^{m \times m}$ be an arbitrary matrix with columns
$d_i$, $i \in [m]$ and set $d_0 := \zerovec$. Associate with $D$ the
map 
\[
h: \R^{m+1} \to \R^{m+1}, h(x, t):= ((tD + (1-t)E_m) x, t).
\]
Then for every subset $I \subset \langle m \rangle$ the map $h|_{\simp_I \times [0,1]}$ represents the homotopy of $\simp_I$
into $\conv\{d_i\mid i \in I\}$,  moving all
points along straight line segments to corresponding points, i.e. it
is a ruled $m$-dimensional surface.
\vspace{0.5cm}
\begin{figure}
\input{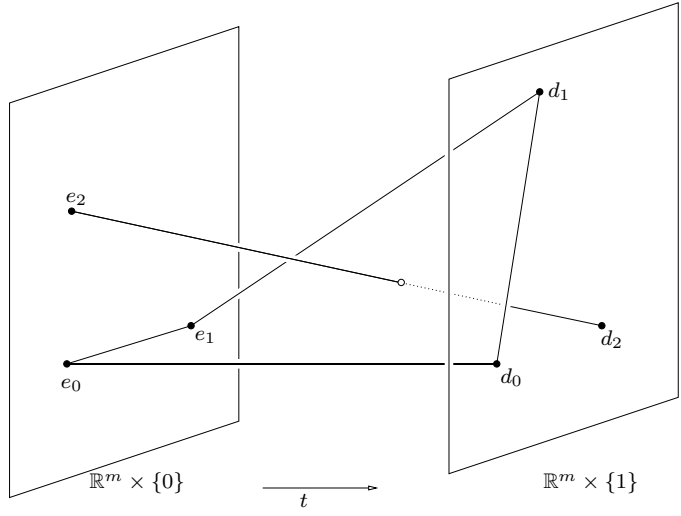}
\caption{Intersecting surfaces $h(\simp_{I_+} \times \R)$
  and $h(\simp_{I_-} \times \R)$ for the matrix $(d_1, d_2)$ and the 
partition $I_+ = \{0,1\}$ and $I_- = \{2\}$}
\label{figure:surfaces}
\end{figure}

We call an eigenvalue of a square matrix \emph{general} if it is simple, its
eigenvector $v$ has no vanishing components, and $\sum v_i \neq 0$. 
This technical condition characterizes the situation where all pairs of ruled 
surfaces defined by disjoint subsets of the vertex set
are transversal.

We begin by characterizing intersection points of pairs of surfaces in terms of eigenvalues of $D$.
\begin{lemma}
\label{lemma1}
Let $D \in \R^{m \times m}$ and $h: \R^{m+1} \to \R^{m+1}$ its
associated map.
\begin{enumerate}[\rm (a)]
\item  
Let $I_+, I_- \subset \langle m\rangle$ such that $I_+ \cap I_- = \emptyset$.
If the surfaces $h(\simp_{I_+} \times \R)$
  and $h(\simp_{I_-} \times \R)$ intersect at time $t$, then $1 -
  \frac{1}{t}$ is an eigenvalue of $D$.
\item Let $u \neq 1$ be a general eigenvalue of $D$.
  Then $u$ uniquely determines disjoint subsets $I_+^u$ and $I_-^u \subset
  \langle m \rangle$ with $0 \in I_+^u$ such that 
  $h(\simp_{I_+^u} \times \R)$ and $h(\simp_{I_-^u} \times \R)$
  intersect at time \[ t = \tfrac{1}{1-u}.\] 
\end{enumerate} 
\end{lemma}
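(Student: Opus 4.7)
The approach is to translate the surface-intersection condition into a linear-algebra condition on the matrix $M_t := tD + (1-t)E_m$. An intersection of $h(\simp_{I_+} \times \R)$ and $h(\simp_{I_-} \times \R)$ at time $t$ is precisely the existence of $x_\pm \in \simp_{I_\pm}$ with $M_t x_+ = M_t x_-$, i.e.\ the vector $v := x_+ - x_-$ lies in the kernel of $M_t$. Part (a) is then almost immediate: disjointness of $I_+$ and $I_-$ together with the affine independence of $\{e_i : i \in \langle m \rangle\}$ forces $v \neq \zerovec$; invertibility of $M_0 = E_m$ forces $t \neq 0$; and dividing $M_t v = 0$ by $t$ rearranges to $D v = \bigl(1 - \tfrac{1}{t}\bigr) v$.

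For part (b), I would start from an eigenvector $v$ for $u$. The generality hypothesis supplies three pieces of data, each used exactly once below: simplicity of $u$ (so $v$ is unique up to a nonzero scalar), no vanishing coordinate of $v$ (so the sign sets $P := \{i \in [m] : v_i > 0\}$ and $N := \{i \in [m] : v_i < 0\}$ partition $[m]$), and $\sum_i v_i \neq 0$ (so the weights $s_+ := \sum_{i \in P} v_i$ and $s_- := -\sum_{i \in N} v_i$ are unequal). After normalizing the sign of $v$ so that $\sum_i v_i < 0$, equivalently $s_+ < s_-$, I set $I_+^u := \{0\} \cup P$, $I_-^u := N$, $\lambda := 1/s_-$, $x_+ := \lambda \sum_{i \in P} v_i e_i$, and $x_- := -\lambda \sum_{i \in N} v_i e_i$. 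A direct check shows $x_\pm \in \simp_{I_\pm^u}$ — the $e_0$-coordinate of $x_+$ is $1 - s_+/s_- > 0$ precisely because $s_+ < s_-$ — and $x_+ - x_- = \lambda v$, whence $M_t(x_+ - x_-) = \lambda(tu + 1 - t)v$ vanishes iff $t = \tfrac{1}{1-u}$.

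For uniqueness, any admissible pair $(I_+, I_-)$ with $0 \in I_+$ yields, via the construction in (a), an eigenvector $v' = x_+ - x_-$ of $D$ for the eigenvalue $u$; simplicity forces $v' = \mu v$ for some $\mu \neq 0$, and since $v$ has no zero coordinate, $I_+ \cap [m]$ must coincide with the positive support of $v'$ and $I_-$ with its negative support, so $(I_+, I_-)$ is determined by the sign of $\mu$ alone. The opposite sign $\mu < 0$ swaps the roles of $s_+$ and $s_-$ and would force the resulting $e_0$-coordinate to be negative, which is impossible; hence $(I_+, I_-) = (I_+^u, I_-^u)$. The only real difficulty in the argument is this bookkeeping — keeping the sign convention for $v$ aligned with the constraint $0 \in I_+^u$ — after which everything reduces to direct matrix arithmetic.
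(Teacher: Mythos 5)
Your proof is correct and follows essentially the same route as the paper: part (a) via the observation that an intersection at time $t$ puts the difference vector in $\ker\bigl(tD+(1-t)E_m\bigr)$, and part (b) via the sign pattern of the eigenvector, normalized so the negative mass dominates, with the vertex $e_0=\zerovec$ absorbing the slack on the positive side and the scaling $1/s_-$ matching the paper's $1/V$. The only difference is that you also spell out the uniqueness of $(I_+^u,I_-^u)$ (using simplicity and the nonvanishing coordinates, with the sign $\mu<0$ excluded by the negative $e_0$-coordinate), a point the paper's proof leaves implicit.
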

\emph{Another point of view:} If $u \neq 1$ is an eigenvalue of $D$ then $h(\simp_{\langle m\rangle}\times \{t\})$ fails to span $\R^m \times \{t\}$. So we get a Radon partition in some lower dimensional subspace of $\R^m \times \{t\}$. If the eigenvector is general then we get a unique Radon partition.
\begin{proof}
\begin{enumerate}[\rm (a)]
\item Let $(p, t) \in h(\simp_{I_+} \times \R)
  \cap h(\simp_{I_-} \times \R) $ be an intersection point. Then $p$
  has the representation 
\[
p \ \ = h(\alpha, t) = h(\beta, t),
\] 
that is,
\[
p \ \ = \ \ \sum_{i \in I_+} \alpha_i (td_i + (1-t)e_i) \  = \   \sum_{j \in I_-} \beta_j (td_j + (1-t)e_j) 
\]  
with $$\sum_{i \in I_+} \alpha_i = \sum_{j \in I_-} \beta_j = 1,$$
$\alpha_i, \beta_j >0$ for all $i \in I_+$, $j \in I_-$.
Because of $t \neq 0$ and $e_0 = d_0 = \zerovec$  we can rewrite this as
\[
t\ \Big( \sum_{i \in I_+} \alpha_i \big(d_i -(1 - \tfrac{1}{t})e_i\big) +
  \sum_{j \in I_-} (-\beta_j) \big(d_j -(1 - \tfrac{1}{t})e_j\big) \Big) \ = \ \zerovec  
\]
Therefore $\sum_{i \in I_+} \alpha_i e_i + \sum_{j \in I_-} (-\beta_j)
e_j$ is an eigenvector of $D$ with eigenvalue $1- \frac{1}{t}$.
\item
Let $u \neq 1$ be a general eigenvalue of $D$ and $v$ its eigenvector. Consider the sets
$\tilde{I}^u_+ := \{i \in [m] \mid  v_i >0\}$ and  $I^u_- := 
\{i \in [m] \mid  v_i < 0\}$ of positive and negative coefficients respectively. Without loss of generality
assume that $V := -\sum_{i \in I^u_-} v_i > \sum_{i \in \tilde{I}^u_+}
v_i$.
Denote $I^u_+ := \tilde{I}^u_+ \cup \{0\}$, $\alpha_i:= \frac{v_i}{V}$ 
for $i \in \tilde{I}^u_+$ and $\alpha_0 := 1 -\sum_{i \in
  \tilde{I}^u_+} \alpha_i$ ,  $\beta_j := -\frac{v_j}{V}$ for $j \in
I_-$ and $t := \frac{1}{1-u}$. Then
$(p, t)$ with
\[
p \ \ = \ \ \sum_{i \in I^u_+} \alpha_i (td_i + (1-t)e_i) \  = \   \sum_{j \in I^u_-} \beta_j (td_j + (1-t)e_j) 
\] is an intersection  point of the two simplices $h(\simp_{I^u_+} \times \{t\})$ and
$h(\simp_{I^u_-} \times \{t\})$.\\
So the surfaces $h(\simp_{I^u_+} \times
\R)$ and $h(\simp_{I^u_-} \times \R)$ intersect at time $t$. 
\end{enumerate}
\end{proof}

\begin{remark}
In the case of a general eigenvalue  $u = 1$ we can still find the
sets $I_+^u$ and $I_-^u$. Then the surfaces $h(\simp_{I_+^u} \times \R)$ and $h(\simp_{I_-^u} \times \R)$
have parallel ends. This complements the preceding
lemma because they then \lq meet at time $t = \infty$\rq .
\end{remark}

Denote by  
\[\mathcal{P} := \{\{I_+, I_-\} \mid  I_+ \cup I_- =
\langle m \rangle, I_+ \cap  I_- = \emptyset , 0\in I_+\}
\] 
the set of all bipartitions of $\langle m \rangle$.
\begin{cor}
\label{cor:points}
Let $D \in \R^{m \times m}$ and $\ell$ be the multiplicity of the 
eigenvalue $1$ of $D$.\\
 Then
\[
\sum_{\{I_+, I_-\} \in \mathcal{P}} \# \left( h(\simp_{I_+} \times \R) \cap
h(\simp_{I_-} \times \R)\right) \ \ 
 \leq \ \ m - \ell \ ,
\]
that is, the total number of intersection points of pairs of surfaces of the form
$h(\simp_I \times \R)$ and $h(\simp_{\langle m \rangle \setminus I} \times \R)$ can
not exceed $m-\ell$.
\end{cor}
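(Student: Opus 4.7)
The strategy is to build a bijection between the intersection points counted on the left-hand side and the general non-unity eigenvalues of $D$, using Lemma~\ref{lemma1} in both directions, and then to control the size of the latter set by $m-\ell$.

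First I would unpack Lemma~\ref{lemma1}(a) to show that every such intersection point produces a general eigenvector of $D$. From the proof of that lemma, an intersection of $h(\simp_{I_+}\times\R)$ and $h(\simp_{I_-}\times\R)$ at time $t$ yields the eigenvector
\[
w \ :=\ \sum_{i \in I_+} \alpha_i\,e_i \ -\ \sum_{j \in I_-} \beta_j\, e_j
\]
for the eigenvalue $u = 1 - \tfrac{1}{t}$. Since $(I_+\setminus\{0\}) \cup I_- = [m]$ and every $\alpha_i, \beta_j$ is strictly positive, every coordinate $w_k$ for $k \in [m]$ is nonzero; and $\sum_{k=1}^{m} w_k = (1-\alpha_0) - 1 = -\alpha_0 < 0$, because $0 \in I_+$ forces $\alpha_0 > 0$. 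This is precisely the ``general eigenvector'' condition, so $u$ is a general eigenvalue of $D$.

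Next I would invoke Lemma~\ref{lemma1}(b) for the reverse direction: each general eigenvalue $u \neq 1$ of $D$ determines a unique bipartition $\{I_+^u, I_-^u\} \in \mathcal{P}$ (via the sign pattern of the eigenvector, after fixing its sign using $0 \in I_+^u$ and the normalisation $\sum_k w_k < 0$) and a unique intersection point at time $t = \tfrac{1}{1-u}$. Composing the two directions yields a bijection between the intersection points summed on the left-hand side and the general non-unity eigenvalues of $D$. Since every general eigenvalue is by definition simple, the total algebraic multiplicity of all non-unity general eigenvalues is at most $m - \ell$, which is the stated bound.

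The main obstacle is that this argument only genuinely controls intersections attached to \emph{simple} non-unity eigenvalues. If $D$ has a non-simple eigenvalue $u \neq 1$, the kernel of $tD + (1-t)E_m$ at $t = \tfrac{1}{1-u}$ is multi-dimensional, the intersections at that time fail to be isolated, and a naive set-theoretic count can blow up past $m - \ell$. The corollary has therefore to be read under the implicit transversality hypothesis that all non-unity eigenvalues of $D$ are general; generic matrices $D$ satisfy this, and when the bound is needed for an arbitrary $D$ it can be recovered by perturbing $D$ to a generic one and using upper semi-continuity of the count of isolated transversal intersections.
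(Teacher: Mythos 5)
Your generic-case argument is essentially the intended one: the paper gives no separate proof of Corollary~\ref{cor:points} but treats it as immediate from Lemma~\ref{lemma1}, read in the transversal situation singled out just before that lemma (every eigenvalue $\neq 1$ of $D$ general), where each intersection point of a pair in $\mathcal{P}$ happens at a time $t$ with $u=1-\tfrac1t$ an eigenvalue $\neq 1$, each such general eigenvalue accounts for exactly one pair and one intersection point, and there are at most $m-\ell$ eigenvalues different from $1$ counted with multiplicity. Up to that point your proposal and the paper agree.

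Two steps of your write-up are genuinely wrong, though. First, in the forward direction you conclude ``so $u$ is a general eigenvalue of $D$'': exhibiting one eigenvector with nonvanishing components and nonzero coordinate sum does not give simplicity, which is part of the definition and is precisely what your injectivity/bijection needs; moreover the strict positivity of all $\alpha_i,\beta_j$ (in particular $\alpha_0>0$) is not automatic for intersections of the \emph{closed} simplices --- it is part of the transversality you only assume later. You do walk this back in your last paragraph, but the repair you then offer for arbitrary $D$ --- perturb $D$ to a generic matrix and invoke ``upper semi-continuity of the count of isolated transversal intersections'' --- fails, because semicontinuity goes the wrong way: transversal intersection points persist under small perturbations, while non-transversal ones may simply disappear, so the degenerate count is not bounded by the nearby generic count. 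In fact the stated inequality is false for degenerate $D$: take $D=2E_m$ with $m\ge 2$, so $\ell=0$; then $tD+(1-t)E_m$ vanishes at $t=-1$, every surface $h(\simp_I\times\R)$ passes through the point $(\zerovec,-1)$, and all $2^m-1$ pairs in $\mathcal{P}$ intersect there, giving a total of $2^m-1>m-\ell$. So the corollary has to be read with the genericity hypothesis built in (as it effectively is in its application in the proof of Theorem~\ref{thm:properties}, via the general-position assumptions on $f$ and $g$); it cannot be recovered for arbitrary $D$ by a perturbation argument, and your proof of that extension is a genuine gap rather than a harmless remark.
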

Now we calculate intersection numbers of the surfaces found in
Lemma \ref{lemma1}. To this end we impose orientations on the surfaces
in question.
In the following let $h(\simp_{I} \times \R)$ carry the orientation
induced by the parametrization $\psi := h \circ (\varphi_{I} \times
\id)$. Further let $I_+^u= \{i_0, \ldots , i_k \}_<$, $I_-^u= \{i_{k+1}, \ldots , i_m
\}_<$ with $i_0  = 0$
and denote by
$(I^u_+, I^u_-)$ the \emph{\lq shuffle\rq \ permutation} $(i_0, \ldots , i_m) \mapsto (0, \ldots , m)$.\\
\begin{lemma}
\label{lemma:index}
Let $D \in \R^{m \times m}$ and $u = 1 - \frac{1}{t}$ be an eigenvalue
of $D$. Denote by $(p, t)$ the intersection point of 
the surfaces $h(\simp_{I_+^u} \times \R)$ and $h(\simp_{I_-^u} \times \R)$.
The surfaces 
intersect transversally and  $(p, t)$ is an inner point  if and only if $u$ is general.
In this case we have
\[
\left. \I\big(h(\simp_{I_+^u} \times \R), h(\simp_{I_-^u} \times
\R)\big)\right| _{(p, t)} = \sgn (I^u_+, I^u_-)\sgn (t^m \chi^\prime_D(u)),
\]
where $\chi^\prime_D$ is the derivative of the characteristic
polynomial $\chi_D(u) = \det(D-uE_m)$ of $D$.
\end{lemma}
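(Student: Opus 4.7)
The plan is to compute the local intersection index directly from the definition as the sign of the determinant of the $(m+1)\times(m+1)$ matrix of partial derivatives, collapse it by one row via Laplace expansion, and then extract $\chi'_D(u)$ by a perturbation of the eigenvalue.

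First I would write out the partial derivatives of $\psi_\pm := h\circ(\varphi_{I_\pm^u}\times\id)$ at the intersection point. Setting $M(t) := (1-t)E_m+tD$ and using $e_{i_0}=0$, each simplex-coordinate partial has the form $(M(t)\xi,0)$ (with $\xi$ one of $e_{i_j}$ or $e_{i_r}-e_{i_{k+1}}$) and each time-coordinate partial has the form $((D-E_m)\varphi_{I_\pm^u}(\cdot),1)$, so only two entries of the last row of the intersection matrix are nonzero. The vector $w := \varphi_{I_+^u}(\alpha)-\varphi_{I_-^u}(\beta)$ is, by the construction in Lemma~\ref{lemma1}(b), a scalar multiple of the eigenvector $v$, and therefore $(D-E_m)w = (u-1)w = -w/t$. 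Subtracting one time-partial column from the other then kills one of those two $1$'s while replacing its spatial part by $w/t$, and Laplace expansion along the last row collapses the problem to computing $\pm\det M'$, where $M'$ is the $m\times m$ matrix whose columns are the vectors $M(t)e_{i_j}$ for $j\in[k]$, the differences $M(t)(e_{i_r}-e_{i_{k+1}})$ for $k{+}2\le r\le m$, and $w/t$ in the remaining slot.

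For the equivalence ``transversal and inner point'' iff $u$ general, I would read off from Lemma~\ref{lemma1}(b) that the barycentric coordinates of $(p,t)$ are $\alpha_i=v_i/V$, $\beta_j=-v_j/V$ and $\alpha_0=1-\sum_i v_i/V$, so strict positivity of all of them is equivalent to $v$ having no vanishing entry together with $\sum_i v_i\neq 0$. The simplicity of $u$ (the last ingredient of ``general'') will fall out of the determinant computation below as the condition $\chi'_D(u)\neq 0$, which is precisely the condition for $\det M'\neq 0$ and hence for transversality.

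The core step is computing $\det M'$. The identity $M(t)=t(D-uE_m)$ makes $M(t)$ singular with $\det M(t)=t^m\chi_D(u)=0$, so $M(t)$ cannot be factored out of $M'$ directly; the obstruction is the column $w/t$, since $M(t)w=0$ places $w/t$ outside the range of $M(t)$. To bypass this I would perturb $u\mapsto u+\eps$: the matrix $M_\eps:=t(D-(u+\eps)E_m)$ is invertible for small $\eps\neq 0$ and satisfies $M_\eps w=-t\eps\,w$, so that $w/t=M_\eps\bigl(-w/(t^2\eps)\bigr)$. Replacing $M(t)$ by $M_\eps$ in the remaining columns yields $M'_\eps\to M'$ which factors as $M_\eps\cdot L_\eps$; then $\det M_\eps=t^m\chi_D(u+\eps)$, while a short reduction of $L_\eps$ by column operations, using the expansion of $w$ in the basis $(e_{i_1},\ldots,e_{i_m})$ and the normalisation $\sum_{j\in I_-^u}\beta_j=1$, gives $\det L_\eps=\pm\sgn(I_+^u,I_-^u)/(t^2\eps)$. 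Letting $\eps\to 0$, $\chi_D(u+\eps)/\eps\to\chi'_D(u)$ since $\chi_D(u)=0$, so $\det M'$ becomes proportional to $t^{m-2}\chi'_D(u)\sgn(I_+^u,I_-^u)$; combining with the Laplace cofactor sign and using $\sgn(t^{m-2})=\sgn(t^m)$ produces the claimed formula. The hard part will be sign bookkeeping: the cofactor sign $(-1)^{(m+1)+(k+1)}$, the sign from reordering columns of $M'$ to place $w/t$ in slot $k+1$ before factoring $M_\eps$, and the permutation sign that emerges from reducing $L_\eps$ must all combine consistently with $\sgn(t^m\chi'_D(u))$; each individual step is elementary multilinear algebra, but their composition requires care.
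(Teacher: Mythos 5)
Your proposal is correct, and its skeleton coincides with the paper's: the same Jacobian matrix of the parametrizations $\psi_\pm$, the same column subtraction exploiting that $w=\varphi_{I^u_+}(\alpha)-\varphi_{I^u_-}(\beta)$ lies in the eigenspace of $u$, and the same Laplace expansion along the last row, reducing everything to one $m\times m$ determinant. Where you genuinely diverge is in evaluating that determinant. The paper expands the eigenvector column as $\sum_j v_{i_j}e_{i_j}$, substitutes the intersection relation \eqref{p} into each summand, and after a three-case column manipulation with a telescoping sum matches the result with the expansion $\chi_D'(u)=\sum_j\det(d^u_1,\ldots,-e_j,\ldots,d^u_m)$. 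You instead perturb the eigenvalue: with $M_\eps=t\big(D-(u+\eps)E_m\big)$ you factor the perturbed matrix as $M_\eps L_\eps$, compute $\det M_\eps=t^m\chi_D(u+\eps)$ and $\det L_\eps=\pm\sgn(I^u_+,I^u_-)/(t^2\eps)$ (your reduction of $L_\eps$ via the normalisation $\sum_{j\in I^u_-}\beta_j=1$ is correct), and recover $\chi_D'(u)$ as the limit of the difference quotient $\chi_D(u+\eps)/\eps$; this is legitimate because only the non-eigenvector columns are perturbed and the determinant is continuous. Your route buys a cleaner, less error-prone replacement for the paper's most laborious step (the case analysis and telescoping), and it makes the appearance of the derivative of $\chi_D$ conceptually transparent rather than an identity to be recognized. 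The cost is that the exact sign, not just the value up to $\pm$, still has to be assembled from the cofactor sign, the direction of the column subtraction, and the permutation sign hidden in $\det L_\eps$ --- which is precisely the bookkeeping the paper's three-case computation performs; note also that the correct relation is $(D-E_m)w=(u-1)w=-w/t$, so the modified column is $-w/t$ or $+w/t$ depending on which time column you subtract from which (the paper itself has a small slip here, asserting eigenvalue $1/t$ for $D-E_m$). Your treatment of the ``transversal and inner point iff $u$ general'' equivalence via positivity of the barycentric coordinates and $\chi_D'(u)\neq 0$ is at the same level of detail as the paper's and is fine.
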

\begin{proof}
Our calculations differ in so far from those in \cite[Proof of Lemma 3.2]{novik2000} as we
have to deal with the permutation $(I^u_+, I^u_-) : j \mapsto i_j$.
Denote the intersection point of the surfaces in question by
$(p,t)$ where
\begin{equation}
\label{p}
p \ \ =\ \  t\sum_{j=0}^k \alpha_{i_j} d_{i_j}^u = 
t\sum_{j = k+1}^m \beta_{i_j} d_{i_j}^u
\end{equation}
with $d_i^u := d_i - u e_i$.

For checking transversality as well as for the index of intersection at
$(p,t)$ we examine
\[
\mathcal{D}:= \det \left( \left(\frac{\del
      \psi_+}{\del \xi_1 }\right), \ldots,\left(\frac{\del
      \psi_+}{\del\xi_k}\right), \left(\frac{\del
      \psi_+}{\del t}\right), \left( \frac{\del
      \psi_-}{\del \xi_1 }\right), \ldots,\left(\frac{\del
      \psi_-}{\del\xi_{m-k}}\right), \left(\frac{\del
      \psi_-}{\del t}\right)\right),
\]
where the first $k$
derivatives are calculated at $(\alpha_{i_1}, \ldots
,\alpha_{i_k}, t)$ and the last $m-k$ at $(\beta_{i_{k+2}}, \ldots
,\beta_{i_{m}}, t)$. We therefore get
\begin{eqnarray*}
\mathcal{D} &=&
 \lefteqn{\det \left( t \binom{d_{i_1}^u}{0}, \ldots, t \binom{d_{i_k}^u}{0}, 
    \binom{\sum_{j=1}^{k} \alpha_{i_j} (d_{i_j}\!\!\! -e_{i_j})}{1}, \right.}\\
 & & \left. \phantom{det~(~}  t \binom{d_{i_{k+2}}^u\!\!\! - d_{i_{k+1}}^u}{0}, \ldots
  , t \binom{d_{i_m}^u\!\!\! - d_{i_{k+1}}^u}{0}, 
  \binom{\sum_{j=k+1}^{m} \beta_{i_j} (d_{i_j}\!\!\! -e_{i_j})}{1}  \right).
\end{eqnarray*}
With $i_0 = 0$, $d_0 = e_0 = \zerovec$ we have
\[\binom{\sum_{j=1}^{k} \alpha_{i_j} (d_{i_j}\!\!\! -e_{i_j})}{1}  -  
  \binom{\sum_{j=k+1}^{m} \beta_{i_j} (d_{i_j}\!\!\! -e_{i_j})}{1} \ \ 
  =  \ \ \tfrac{1}{t} v\ ,
\]
as $v =\sum_{j=1}^{k} \alpha_{i_j} e_{i_j} - \sum_{j=k+1}^{m} \beta_{i_j}e_{i_j}$ is also an eigenvector of $D-E$ with eigenvalue $\frac{1}{t}$.
Subtracting the last column from the $(k+1)$st and 
using Laplace expansion with respect to the last row we get 
\begin{eqnarray}
\nonumber
\mathcal{D} &=& t^{m-2} \det (d^u_{i_1}, \ldots, d^u_{i_k}, v,
d^u_{i_{k+2}}\!\!\!- d^u_{i_{k+1}}, \ldots,  d^u_{i_m}\!\!\!- d^u_{i_{k+1}})\\
&=&
\label{d} t^{m-2} \sum_{j=1}^m v_j \det (d^u_{i_1}, \ldots, d^u_{i_k}, e_j,
d^u_{i_{k+2}}\!\!\!- d^u_{i_{k+1}}, \ldots,  d^u_{i_m}\!\!\!- d^u_{i_{k+1}} )
\end{eqnarray}
From \eqref{p} we have
\begin{equation*}
0 \ \ = \ \ \sum_{j = 1}^k \alpha_{i_j} d_{i_j}^u \ \  -\  d^u_{i_{k+1}} \ -\sum_{j =
    k+2}^m \beta_{i_j} (d_{i_j}^u\!\!\! - d^u_{i_{k+1}})\ .
\end{equation*}
Now we examine the summands of the last expression of $\mathcal{D}$ in
three groups.
In the first case, $j < k+1$, we have $v_{i_j} = \alpha_{i_j}$. We substitute $\alpha_{i_j}d^u_{i_j}$, cancel all terms except
$d^u_{i_{k+1}}$ and exchange $d^u_{i_{k+1}}$ and $e_{i_j}$:
\[
\begin{split}
\alpha_{i_j}& \det (d^u_{i_1}, \ldots, d^u_{i_k}, e_{i_j},
d^u_{i_{k+2}}\!\!\!- d^u_{i_{k+1}}, \ldots,  d^u_{i_m}\!\!\!- d^u_{i_{k+1}} )\\
&= \det (d^u_{i_1}, \ldots, \alpha_{i_j}d^u_{i_j}, \ldots, d^u_{i_k}, e_{i_j},
d^u_{i_{k+2}}\!\!\!- d^u_{i_{k+1}}, \ldots,  d^u_{i_m}\!\!\!- d^u_{i_{k+1}} )\\
&=\det (d^u_{i_1}, \ldots, d^u_{i_{k+1}}, \ldots, d^u_{i_k}, e_{i_j},
    d^u_{i_{k+2}}\!\!\!- d^u_{i_{k+1}}, \ldots,  d^u_{i_m}\!\!\!- d^u_{i_{k+1}})\\
&=-\det (d^u_{i_1}, \ldots,e_{i_j} , \ldots,  d^u_{i_m})\ .
\end{split}
\]
By an analogous calculation the second case, $j > k+1$,  yields
\[
\begin{split}
-\beta_{i_j}& \det (d^u_{i_1}, \ldots, d^u_{i_k}, e_{i_j},
d^u_{i_{k+2}}\!\!\!- d^u_{i_{k+1}}, \ldots,  d^u_{i_m}\!\!\!- d^u_{i_{k+1}} )\\
&=-\det (d^u_{i_1}, \ldots, e_{i_j} , \ldots,  d^u_{i_m})\ .
\end{split}
\]
For the remaining term, $j=k$,  we use the same procedure on each of the summands 
after the first step and evaluate the telescope sum in the last step. 
Thus we get:
\[
\begin{split}
&-\beta_{i_{k+1}} \det (d^u_{i_1}, \ldots, d^u_{i_k}, e_{i_{k+1}},
d^u_{i_{k+2}}\!\!\!- d^u_{i_{k+1}}, \ldots,  d^u_{i_m}\!\!\!- d^u_{i_{k+1}} )\\
&= -\det (d^u_{i_1}, \ldots, d^u_{i_k}, e_{i_{k+1}},
d^u_{i_{k+2}}\!\!\!- d^u_{i_{k+1}}, \ldots,  d^u_{i_m}\!\!\!- d^u_{i_{k+1}} )\\
&\quad + \sum_{j=k+2}^m  \det (d^u_{i_1}, \ldots, d^u_{i_k}, e_{i_{k+1}},
d^u_{i_{k+2}}\!\!\!- d^u_{i_{k+1}}, \ldots,  \beta_{i_j}(d^u_{i_j}\!\!\!-
d^u_{i_{k+1}}), \ldots,  d^u_{i_m}\!\!\!- d^u_{i_{k+1}} )\\
&= -\det (d^u_{i_1}, \ldots, d^u_{i_k}, e_{i_{k+1}},
d^u_{i_{k+2}}\!\!\!- d^u_{i_{k+1}}, \ldots,  d^u_{i_m}\!\!\!- d^u_{i_{k+1}} )\\
&\quad + \sum_{j=k+2}^m  \det (d^u_{i_1}, \ldots, d^u_{i_k}, e_{i_{k+1}},
d^u_{i_{k+2}}\!\!\!- d^u_{i_{k+1}}, \ldots,  -d^u_{i_{k+1}}, \ldots,  d^u_{i_m}\!\!\!- d^u_{i_{k+1}} )\\
&= -\det (d^u_{i_1}, \ldots, d^u_{i_k}, e_{i_{k+1}},
d^u_{i_{k+2}}\!\!\!- d^u_{i_{k+1}}, \ldots,  d^u_{i_m}\!\!\!- d^u_{i_{k+1}} )\\
&\quad - \sum_{j=k+2}^m  \det (d^u_{i_1}, \ldots, d^u_{i_k}, e_{i_{k+1}},
d^u_{i_{k+2}}, \ldots,d^u_{i_{j-1}},  d^u_{i_{k+1}}, d^u_{i_{j+1}}\!\!\!- d^u_{i_{k+1}}\ldots,  d^u_{i_m}\!\!\!- d^u_{i_{k+1}} )\\
&=-\det (d^u_{i_1},  \ldots, d^u_{i_k}, e_{i_{k+1}} , d^u_{i_{k+2}},
    , \ldots,  d^u_{i_m})\ .
\end{split}
\]
So in every single case we have 
\[
v_{i_j} \det (d^u_{i_1}, \ldots, d^u_{i_k}, e_{i_j},
d^u_{i_{k+2}}\!\!\!- d^u_{i_{k+1}}, \ldots,  d^u_{i_m}\!\!\!- d^u_{i_{k+1}} )\\
= -\det (d^u_{i_1}, \ldots, e_{i_j} , \ldots,  d^u_{i_m})\ .
\]
To complete the calculation we insert these results into \eqref{d}:
\[
\begin{split}
\mathcal{D} 
&= -t^{m-2} \sum_{j=1}^m \det (d^u_{i_1}, \ldots, e_{i_j} , \ldots,
d^u_{i_m})\\
&= \sgn(I^u_+, I^u_-) t^{m-2} \sum_{j=1}^m \det (d^u_1, \ldots, -e_j, \ldots,
d^u_m)\\
&= \sgn(I^u_+, I^u_-) t^{m-2} \chi_D^\prime (u).  
\end{split}
\]
We have $\chi^\prime(u) \neq 0$ since $u$ is simple. Therefore the
intersection is transversal and the index of intersection at the
point under consideration is $\sgn \mathcal{D}$.
\end{proof}

\begin{cor}
\label{cor:index}
Let $D$ be nonsingular, all its negative eigenvalues be
general and $\ell_-$ the number of negative eigenvalues. Denote $\tilde{h}(J):= h(\Delta_{J} \times [0,1])$. \\
Then we have
\begin{equation}
\label{eq:index_full}
\sum_{\{I_+, I_-\} \in \mathcal{P}} \!\!\sgn (I_+, I_-) \ \ \I
\big( \tilde{h}(I_+), \tilde{h}(I_-) \big)
\ \ = \ \ \left\{ 
  \begin{array}{l l} 
    0  & \mbox{ if } \det D > 0\ , \\
    -1 & \mbox{ if } \det D < 0\ .
  \end{array}
\right.  
\end{equation}
For every subset $S \subset \mathcal{P}$ we have 
\begin{equation}
\label{eq:index_part}
- \left \lceil \tfrac{\ell_-}{2} \right \rceil \ \ \leq \ \ 
\sum_{\{I_+, I_-\} \in S} \sgn (I_+, I_-) \ \I
\big( \tilde{h}(I_+), \tilde{h}(I_-) \big)
\ \ \leq\ \  \left \lfloor \tfrac{\ell_-}{2} \right \rfloor\ .
\end{equation}
As a special case we have for every individual pair $ \{I_+, I_-\} \in \mathcal{P}$ the estimates
\begin{equation}
- \left \lceil \tfrac{\ell_-}{2} \right \rceil \ \ \leq \ \ 
\sgn (I_+, I_-) \ \I
\big( \tilde{h}(I_+), \tilde{h}(I_-) \big)
\ \ \leq\ \  \left \lfloor \tfrac{\ell_-}{2} \right \rfloor\ .
\end{equation}
\end{cor}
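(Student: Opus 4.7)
The plan is to translate the geometric sum of signed intersection numbers into an algebraic sum over negative eigenvalues of $D$, then exploit the sign-alternation of the characteristic polynomial $\chi_D$ at its simple real roots. By Lemma~\ref{lemma1}(a) an intersection point of the surfaces $h(\simp_{I_+}\times\R)$ and $h(\simp_{I_-}\times\R)$ lies in $\tilde h(I_+)\cap\tilde h(I_-)$ exactly when its time coordinate $t\in(0,1]$; via $t=\tfrac{1}{1-u}$ this is the same as $u<0$ being an eigenvalue of $D$ (strict since $D$ is nonsingular, so $t=1$, $u=0$ is excluded, and the disjoint simplices $\simp_{I_\pm}$ never meet at $t=0$). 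Conversely, Lemma~\ref{lemma1}(b) attaches to each negative general eigenvalue $u$ a unique pair $\{I^u_+,I^u_-\}\in\mathcal{P}$ and a single interior intersection point, whose local index is, by Lemma~\ref{lemma:index}, $\sgn(I^u_+,I^u_-)\,\sgn(t^m\chi_D'(u))=\sgn(I^u_+,I^u_-)\,\sgn\chi_D'(u)$ since $t>0$. Multiplying by $\sgn(I_+,I_-)$ and summing over any $S\subset\mathcal{P}$ therefore gives
\[
\sum_{\{I_+,I_-\}\in S}\sgn(I_+,I_-)\,\I\bigl(\tilde h(I_+),\tilde h(I_-)\bigr)\ =\ \sum_{u\in U(S)}\sgn\chi_D'(u),
\]
where $U(S)$ denotes the set of negative eigenvalues whose associated partition (via Lemma~\ref{lemma1}(b)) lies in $S$.

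I then analyse the signs $s_j:=\sgn\chi_D'(u_j)$ at the negative eigenvalues $u_1<\dots<u_{\ell_-}<0$. Since each $u_j$ is a simple real root, $\chi_D$ changes sign at $u_j$, so $s_j$ equals the sign of $\chi_D$ immediately to the right of $u_j$; consequently the sequence $(s_j)$ alternates. On the interval $(u_{\ell_-},0]$ the polynomial $\chi_D$ has no real roots, so it keeps constant sign, giving $s_{\ell_-}=\sgn\chi_D(0)=\sgn\det D$. Because complex eigenvalues come in conjugate pairs of positive product, $\sgn\det D=(-1)^{\ell_-}$, so $\det D>0$ forces $\ell_-$ even and $\det D<0$ forces $\ell_-$ odd. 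Combining alternation with $s_{\ell_-}=\sgn\det D$ yields that exactly $\lceil\ell_-/2\rceil$ of the $s_j$ equal $\sgn\det D$ and $\lfloor\ell_-/2\rfloor$ equal $-\sgn\det D$.

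Equation~\eqref{eq:index_full} then follows by taking $S=\mathcal{P}$: the alternating sum $\sum_j s_j$ is $0$ when $\ell_-$ is even ($\det D>0$) and equals $s_{\ell_-}=-1$ when $\ell_-$ is odd ($\det D<0$). For equation~\eqref{eq:index_part} the left-hand side is a signed sum over the subset $U(S)\subseteq\{u_1,\dots,u_{\ell_-}\}$; it is maximised by selecting only positive $s_j$, giving $\lfloor\ell_-/2\rfloor$, and minimised by selecting only negative $s_j$, giving $-\lceil\ell_-/2\rceil$. The singleton special case is the subcase $|S|=1$: a single pair $\{I_+,I_-\}$ may correspond to several eigenvalues, but these still form a subset of $\{u_1,\dots,u_{\ell_-}\}$, so the same bound applies.

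The only genuinely delicate step is the sign-and-permutation bookkeeping in the opening reduction---matching the outer $\sgn(I_+,I_-)$ with the shuffle sign from Lemma~\ref{lemma:index} and verifying positivity of $t^m$ on the relevant time interval; once that is in place the remainder is a clean sign-alternation argument on a real polynomial whose relevant roots are assumed simple.
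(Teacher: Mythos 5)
Your proof is correct and takes essentially the same route as the paper: it converts the signed intersection sum into $\sum \sgn\chi_D'(u)$ over the negative (general) eigenvalues via Lemmas \ref{lemma1} and \ref{lemma:index}, and then exploits the alternation of $\sgn\chi_D'$ at consecutive simple negative roots. The only cosmetic difference is that you anchor the alternation at $\sgn\chi_D(0)=\sgn\det D$, whereas the paper anchors it at the smallest root $u_1$, where $\chi_D'(u_1)<0$.
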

\begin{figure} 
\input{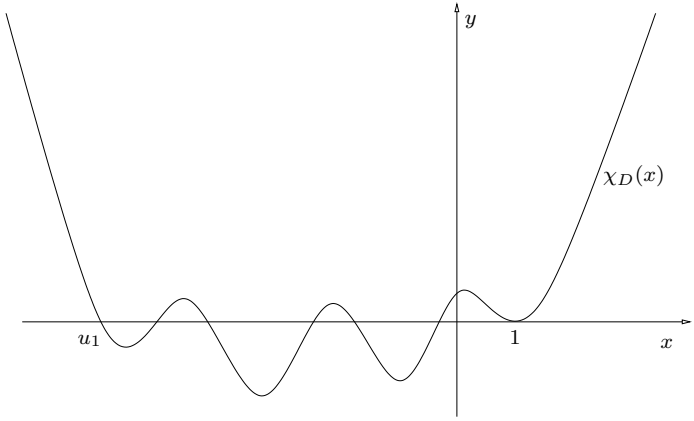}
\caption{A characteristic polynomial with simple negative roots}
\end{figure}
\begin{proof}
Intersection times $t \in [0,1]$ correspond to eigenvalues $u<0$ of $D$.
The first root $u_1$ of $\chi_D$ satisfies $\chi_D^\prime(u_1) < 0$
and two consecutive roots $u, \hat{u}$ of $\chi_D$ satisfy $\sgn
\chi^\prime_D(u) = -\sgn \chi^\prime_D(\hat{u})$. So $\chi_D$ has at most $\left \lceil
\frac{\ell_-}{2} \right \rceil$ negative roots $u$ with $\chi_D^\prime(u) < 0$
and  at most $\left \lfloor  \frac{\ell_-}{2} \right \rfloor$ negative
roots $\tilde{u}$ with $\chi_D^\prime(\tilde{u})>0 $. These are exactly the
terms in the sums above. \\
\end{proof}

\subsubsection{Application to the Deformation Cochain}

The relations between coefficients of $\lambda$  we develop  here are local in the sense that we only look at few vertices
at the same time. We restrict to subcomplexes of $\S$
consisting of $m+1$ points.
For a subset $J:= \{ j_0, \ldots j_m \}_< \subset [N]$ and $k \in \N$ denote 
$\ell_J^k \  := \ \# \big((J\setminus \{j_0\}) \cap [k]\big)$ and
\[
\mathcal{P}_J := \{\tau_+ \times \tau_- \in
\dprod{\S} \mid \dim (\tau_+ \times  \tau_-) \ = \ m-1, \tau_+
\cup \tau_- = J, j_0 \in \tau_+ \}.    
\]
These are the products of simplices with vertices in $J$ that we may insert
into the deformation cochain.

\begin{thm}[Related coefficients of the deformation cochain]
\label{thm:properties}
Let $f$ and $g$ be  general position maps of the vertex set
$\langle N \rangle$ of $\S$ into $\R^m$ and $k \in \langle N \rangle$. 
Assume further that $f(i) = g(i)$ for 
$i \in \langle k \rangle$ and that the set $\{f(0), \ldots, f(N), g(k+1), \ldots, g(N)\}$ is in
general position. \\ 
For every subset $J \subset \langle N\rangle$ with $|J| = m+1$ denote by $\varepsilon_g(J)$ the 
orientation of the simplex $g(J)$.
Then the deformation cochain $\lambda_{f,g} \in \mathcal{C}^{m-1}
(\dprod{\S})$ has the following
properties:\\ 
\begin{equation}
\label{eq:points}
\sum_{\tau_+ \times \tau_- \in \mathcal{P}_J} |\lambda_{f,g}(\tau_+
  \times \tau_-)| \ \ \leq \ \ m - \ell_J^k,
\end{equation}
\begin{equation}
\label{eq:index1}
-1 \ \ \leq \ \ 
\varepsilon_g(J) \sum_{\tau_+ \times \tau_- \in \mathcal{P}_J } \sgn (\tau_+, \tau_-) \lambda_{f,g} (\tau_+
  \times  \tau_-)
\ \ \leq\ \  0,
\end{equation}
and 
\begin{equation}
\label{eq:index2}
- \left \lceil \frac{m-\ell_J^k}{2} \right \rceil \ \ \leq \ \ 
\varepsilon_g(J) \sgn (\tau_+, \tau_-) \lambda_{f,g} (\tau_+
  \times  \tau_-)
\ \ \leq\ \  \left \lfloor \frac{m-\ell_J^k}{2} \right \rfloor
\end{equation}
for every $\tau_+ \times \tau_- \in  \mathcal{P}_J$. 
\end{thm}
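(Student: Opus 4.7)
The plan is to reduce all three bounds to the results of Section \ref{section:deformation} via an affine coordinate change that sends $g(J)$ to the standard simplex. Fix $J = \{j_0,\ldots,j_m\}_<$. If $\ell_J^k = 0$ then all three inequalities follow directly from Corollary \ref{cor:points} and Corollary \ref{cor:index}, so we may assume $\ell_J^k \geq 1$. Then $J \cap [k] \neq \emptyset$, which forces $j_0 \leq k$, hence $j_0 \in \langle k\rangle$ and $f(j_0) = g(j_0)$ by hypothesis.

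Let $A \colon \R^m \to \R^m$ be the unique affine map with $A(g(j_i)) = e_i$ for $i = 0,\ldots,m$; by construction $\sgn \det A = \varepsilon_g(J)$. Setting $p_i := A(f(j_i))$ we have $p_0 = 0$, so the matrix $D \in \R^{m\times m}$ with columns $d_i := p_i$ for $i \in [m]$ (and $d_0 := \zerovec$) is exactly the matrix of Section \ref{section:deformation}. Because $A$ is affine, the straight-line homotopy $(A \times \id) \circ h_{f,g}$ agrees on $\simp_{I} \times [0,1]$ (for every $I \subseteq J$) with the model map $h(x,t) = ((tD + (1-t)E_m)x, t)$ under the identification $j_i \leftrightarrow i$, and intersection numbers pull back with the factor $\sgn \det A = \varepsilon_g(J)$. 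The key observation is that whenever $j_i \in \langle k\rangle$ the hypothesis yields $d_i = e_i$, i.e.~$(D - E_m) e_i = 0$, so $1$ is an eigenvalue of $D$ with multiplicity at least $\ell_J^k$; hence $D$ has at most $m - \ell_J^k$ non-unit eigenvalues and, in particular, at most $m - \ell_J^k$ negative eigenvalues.

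With this reduction in hand, (\ref{eq:points}) is immediate from Corollary \ref{cor:points}, since absolute values are orientation-insensitive; (\ref{eq:index1}) follows from (\ref{eq:index_full}) after multiplying by $\varepsilon_g(J)$, which cancels the sign introduced by pulling intersection numbers back through $A$; and (\ref{eq:index2}) follows from the single-pair estimate of Corollary \ref{cor:index} using the bound $\ell_- \leq m - \ell_J^k$. The main nuisance will be the sign bookkeeping: one has to verify that the shuffle sign $\sgn(\tau_+, \tau_-)$ matches $\sgn(I^u_+, I^u_-)$ under the identification $j_i \leftrightarrow i$, and that the transport of intersection numbers through $A$ contributes exactly the factor $\varepsilon_g(J)$ appearing in the statement.
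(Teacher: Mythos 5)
Your reduction is essentially the paper's, but it has a real gap: the affine change of coordinates you use acts on $\R^m$ only (applied slice-wise as $A\times\id$), and this reduces $\lambda_{f,g}$ on $\mathcal{P}_J$ to the model of Section~\ref{section:deformation} only when $f(j_0)=g(j_0)$, because the model homotopy $h(x,t)=((tD+(1-t)E_m)x,t)$ keeps vertex $0$ fixed at the origin for all $t$ (it needs $d_0=e_0=\zerovec$). You correctly observe that $\ell_J^k\ge 1$ forces $j_0\in\langle k\rangle$ and hence $p_0=A(f(j_0))=A(g(j_0))=\zerovec$, so in that case your argument goes through. But when $j_0\notin\langle k\rangle$ (which covers most subsets $J$ with $\ell_J^k=0$, e.g.\ all $J$ with $j_0>k$), you have $p_0\neq\zerovec$ in general, the transformed homotopy is not of the model form, and Lemma~\ref{lemma1}, Corollary~\ref{cor:points} and Corollary~\ref{cor:index} simply do not apply to it. Your opening claim that for $\ell_J^k=0$ ``all three inequalities follow directly from Corollary~\ref{cor:points} and Corollary~\ref{cor:index}'' is therefore unjustified: those corollaries are statements about the model map associated to a matrix $D$, not about $\lambda_{f,g}$ itself, and the whole point of the proof is the reduction, which is exactly what fails here.

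The paper closes this case with one extra idea you are missing: the normalization is done by an affine transformation $A_J$ of space--time $\R^{m+1}$, determined by sending the $m+2$ points $(g(j_0),0),\ldots,(g(j_m),0),(f(j_0),1)$ to $e_0,\ldots,e_m,e_{m+1}$. Such a map takes $\R^m\times\{t\}$ to $\R^m\times\{t\}$ for every $t$ and commutes with affine interpolation, so $A_J\circ h_{f,g}$ restricted to the simplex on $J$ is again a straight-line homotopy of the model form; moreover the additional condition $(f(j_0),1)\mapsto e_{m+1}$ shears the trajectory of $j_0$ to the vertical segment $\{\zerovec\}\times[0,1]$, i.e.\ it forces $d_0=\zerovec$ even when $f(j_0)\neq g(j_0)$. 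The sign of $\det A_J$ is exactly $\varepsilon_g(J)$, which produces the orientation factor in \eqref{eq:index1} and \eqref{eq:index2}, and when $\ell_J^k\ge1$ the condition $f(j_0)=g(j_0)$ makes the first $\ell_J^k$ columns of $D$ equal to $e_1,\ldots,e_{\ell_J^k}$, giving the eigenvalue-$1$ multiplicity as in your argument. So to repair your proof you should replace your $A\times\id$ by this space--time transformation (or add the corresponding shear in the case $j_0>k$); the rest of your sign bookkeeping and the eigenvalue count then go through as you describe.
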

\begin{proof}
Fix a subset $J:= \{ j_0, \ldots j_m \}_< \subset \langle N\rangle$.
Perform a basis transformation $A_J$ that takes $(g(j_0),0), \ldots
,(g(j_m),0), (f(j_0), 1)$ to $e_0, \ldots , e_{m+1}$ respectively. 
$\varepsilon_g(J)$ is the sign of the determinant of this basis transformation. 
%, because an orientation reversing basis transformation changes the 
% sign of the intersection numbers.
Let $(d_1, 1),
\ldots , (d_m,1)$ be the images of   $(f(j_1),1), \ldots
,(f(j_m),1)$ and $D := (d_1, \ldots , d_m)$. Denote $j: [m] \to J$, $i
\mapsto j_i$.
Then $h := A_J \circ
h_{f,g} \circ j$ is of the form we considered in Subsection
\ref{section:deformation}. Moreover the first $\ell_J^k$
columns of $D$ are $e_1, \ldots ,
e_{\ell_J^k}$. The eigenvalue $1$ has at least multiplicity
$\ell_J^k$. Thus $m -\ell_J^k$ is a upper bound for the number
of negative eigenvalues. Now
\begin{eqnarray*}
 \lambda_{f,g} (\tau_+ \times \tau_-) &=& \I \big( h_{f,g}(\tau_+ \times
[0,1]), h_{f,g}(\tau_- \times [0,1])\big)\\
&=& \varepsilon_g(J) \I \big( h(j^{-1}(\tau_+) \times
[0,1]), h(j^{-1}(\tau_-) \times [0,1])\big)\ 
\end{eqnarray*}
and therefore
\[
|\lambda_{f,g} (\tau_+ \times \tau_-)| \ \ \leq \ \ \# \left( h(j^{-1}(\tau_+) \times
[0,1] \cap h(j^{-1}(\tau_-) \times [0,1])\right)
\]
So we immediately get equation (\ref{eq:points}) from Corollary \ref{cor:points} and equations 
(\ref{eq:index1}) and (\ref{eq:index2}) from Corollary \ref{cor:index}.
\end{proof}
\begin{remark}  
\label{remark:novik}
Condition (\ref{eq:index2}) implies 
\[
- \left \lceil \frac{m}{2} \right \rceil \ \ \leq \ \ 
\lambda_{f,g} (\tau_+ \times  \tau_-)
\ \ \leq\ \  \left \lceil \frac{m}{2} \right \rceil
\]
which are the restrictions on the values 
of $\lambda_{f,g}$ that  Novik derived (cf. \cite[Theorem 3.1]{novik2000}).
\end{remark}

\section{Geometric Realizability and beyond}
\label{section:extensions}

Up to now we have looked at arbitrary general position
maps. In this section we compare a map with special properties such as
a geometric realization with a reference map whose intersection
cocycle can be easily computed.
 
\subsection{The Reference Map}

We start by defining our reference map:

Denote by $c: \langle N \rangle \to \R^m$ the \emph{cyclic map}
which maps vertex $i$ to the point $c(i) = (i, i^2, \ldots
i^m)^t$ on the moment curve.

\begin{prop}[{\cite[Lemma 4.2]{shapiro1957}}]
Let $k + \ell = m$, $k \geq \ell$, $s_0 < s_1 < \ldots < s_k$, $t_0 <
t_1 < \ldots < t_\ell$. If $k = \ell$ assume further that $s_0 < t_0$.
The two simplices $\sigma = \conv \{c(s_0), \ldots, c(s_k)\}$ and
$\tau = \conv \{c(t_0), \ldots, c(t_\ell)\}$ of complementary dimensions
intersect if and only if 
their dimensions differ at most by one and their vertices alternate along the curve:
\[
k = \left\lceil \frac{m}{2} \right\rceil \qquad \mbox{ and } \qquad s_0 < t_0 <
s_1 < \ldots < s_{\lfloor \frac{m}{2} \rfloor}< t_{\lfloor \frac{m}{2}
  \rfloor} (< s_{\lceil \frac{m}{2} \rceil})
\]
In the case of intersection we have
\[
\I(\sigma, \tau) = (-1)^{\frac{(k-1)k}{2}}.
\]
\end{prop}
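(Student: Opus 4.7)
The plan is to exploit the Vandermonde structure of the moment curve, which makes any $m+1$ distinct points $c(x_0),\ldots,c(x_m)$ affinely independent and controls the signs of the $m+2$-point affine dependence in a completely combinatorial way.

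First I would reduce the intersection question to a Radon partition problem. The simplices $\sigma$ and $\tau$ intersect iff there are convex combinations $p=\sum\alpha_i c(s_i)=\sum\beta_j c(t_j)$, which is equivalent to saying that $\{c(s_0),\ldots,c(s_k)\}$ and $\{c(t_0),\ldots,c(t_\ell)\}$ are opposite halves of a Radon partition of the $m+2$ points $c(s_0),\ldots,c(s_k),c(t_0),\ldots,c(t_\ell)\in\R^m$. Since these $m+2$ points admit a unique (up to scalar) affine dependence, it suffices to compute the signs of its coefficients. By Cramer's rule these coefficients are (up to a common sign) the $(m+1)\times(m+1)$ Vandermonde minors obtained by deleting one column; since each such minor is $\prod_{i<j}(x_j-x_i)$ with the remaining parameters in increasing order, the coefficients alternate in sign along the curve. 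Hence the two halves of the Radon partition must themselves strictly interleave along the moment curve. Interleaving of $k+1$ and $\ell+1$ parameters forces $|k-\ell|\le 1$, which (with $k\ge\ell$) gives $k=\lceil m/2\rceil$, and when $k=\ell$ the normalization $s_0<t_0$ picks out the actual case of intersection.

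For the sign, I would use the formula $\I(\sigma,\tau)=\sgn\det\bigl(\binom{1}{p},\binom{1}{c(s_1)},\ldots,\binom{1}{c(s_k)},\binom{1}{c(t_1)},\ldots,\binom{1}{c(t_\ell)}\bigr)$ established in Section~\ref{section:intersection_simplex}. Expanding $p=\sum_{i=0}^k\alpha_i c(s_i)$ by multilinearity and using that each term with $i\ge 1$ produces a repeated column leaves
\[
\I(\sigma,\tau)=\sgn(\alpha_0)\cdot\sgn\det\bigl(\tbinom{1}{c(s_0)},\tbinom{1}{c(s_1)},\ldots,\tbinom{1}{c(s_k)},\tbinom{1}{c(t_1)},\ldots,\tbinom{1}{c(t_\ell)}\bigr).
\]
Since $\alpha_0>0$ in the intersecting case, this last determinant is a Vandermonde evaluated on the parameters $s_0,s_1,\ldots,s_k,t_1,\ldots,t_\ell$, and its sign equals the sign of the permutation that sorts these parameters into increasing order along the curve.

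The remaining step is a combinatorial inversion count: in the interleaved arrangement $s_0<(t_0<)s_1<t_1<s_2<t_2<\cdots$, the $s_i$ occupy ascending target positions among themselves and so do the $t_j$, so the only inversions are between an $s_i$ (with $i\ge 1$) and a $t_j$ (with $j\ge 1$), and such a pair is inverted exactly when $i\ge j+1$. Summing $\sum_{j=1}^{\ell}(k-j)=k\ell-\binom{\ell+1}{2}$ and substituting either $k=\ell$ or $k=\ell+1$ collapses to $\binom{k}{2}$ in both cases, yielding the claimed sign $(-1)^{k(k-1)/2}$. The main nuisance is this inversion count rather than anything conceptual; everything else is carried by the positivity of the Vandermonde determinant on the moment curve.
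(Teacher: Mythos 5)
Your proposal is correct and follows essentially the same route as the paper: both reduce the intersection criterion to the sign alternation of the unique affine dependence among the $m+2$ points on the moment curve (a Vandermonde/Cramer computation), which forces the interleaving pattern. Your explicit evaluation of $\I(\sigma,\tau)$ via the determinant formula, the surviving $\alpha_0$ term, and the inversion count giving $\binom{k}{2}$ is a correct filling-in of the final step that the paper compresses into ``The proposition follows.''
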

 \begin{proof} For every set $\{c_0, \ldots, c_{m+1}\}$ consisting of
 $m+2$ points $c_i = c(u_i)$ with $u_0 < u_1 < \ldots < u_{m+1}$ there
 is a unique affine dependence 
\[
\sum_{i=0}^{m+1} \alpha_i c_i \ = \ \zerovec \qquad \mbox{with} \qquad
\sum_{i=0}^{m+1} \alpha_i \ = \ 0 \ \mbox{and} \ \alpha_0 = 1  .
\]
We calculate the sign of the coefficients $\alpha_k$.
\[
\begin{split}
\det \left( \tbinom{1}{c_0},\ldots, \widehat{\tbinom{1}{c_k}},
      \ldots, \tbinom{1}{c_{m+1}} \right)
&= - \alpha_k \det \left( \tbinom{1}{c_k}, \tbinom{1}{c_1}, \ldots,
  \widehat{\tbinom{1}{c_k}}, \ldots, \tbinom{1}{c_{m+1}}
\right)\\
&= (-1)^k \alpha_k \det \left(\tbinom{1}{c_1}, \ldots, \tbinom{1}{c_{m+1}}\right)
\end{split}
\]
Since $\det \left( \binom{1}{c_0}, \ldots, \widehat{\binom{1}{c_k}},
  \ldots, \binom{1}{c_{m+1}} \right)$ and $\det
\left(\binom{1}{c_1}, \ldots, \binom{1}{c_{m+1}} \right)$ are both 
positive we get 
\[
(-1)^k \alpha_k >0
\]
that is, 
\[
\sgn \ \alpha_k \ \ = \ \ \left\{ \begin{array}{l l}
                         +1 & \mbox{if } k \mbox{ is even}\\ 
                         -1 & \mbox{if } k \mbox{ is odd.} 
                        \end{array}
                \right.
\]
The proposition follows.
\end{proof}

\subsection{Deformation Cochains of Geometric Realizations}

If a simplicial maps defining the deformation cochain is a
simplicial \emph{embedding}, we know, that the images of certain simplices don't intersect.
The following trivial observation about the coefficients of deformation cochains is the key to 
bring in the combinatorics of the complex $\K$.
\begin{lemma}
\label{eq:forbidden}
Let $f, g: \langle N\rangle \to \R^m$ be general position maps.\\
If $f(\sigma) \cap f(\tau) = \emptyset$ and $\dim \sigma + \dim \tau =
m$ then
\begin{equation*}
\delta \lambda_{f,g} (\sigma \times \tau) \ \ = \ \ 
 - \varphi_g(\sigma \times \tau) \ .
\end{equation*}
\end{lemma}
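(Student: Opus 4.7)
The plan is to derive this essentially as an immediate corollary of Proposition~\ref{prop:fundamental} together with the very definition of the intersection cocycle. The strategy has just two short moves, and there is no real obstacle.

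First, I would invoke Proposition~\ref{prop:fundamental}, which gives the identity
\[
\delta \lambda_{f,g}(\sigma \times \tau) \ =\ \varphi_f(\sigma \times \tau) - \varphi_g(\sigma \times \tau).
\]
Thus it suffices to establish that $\varphi_f(\sigma \times \tau) = 0$ under the hypothesis $f(\sigma) \cap f(\tau) = \emptyset$.

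Second, I would unfold the definition of the intersection cocycle from Lemma and Definition~3.3:
\[
\varphi_f(\sigma \times \tau) \ =\ (-1)^{\dim \sigma}\, \I\big(f(\sigma),f(\tau)\big).
\]
Since $f(\sigma)$ and $f(\tau)$ have complementary dimensions and disjoint images, they do not intersect at any point, so the signed intersection number $\I(f(\sigma),f(\tau))$ is $0$ by the definition in Section~\ref{section:intersection_simplex}. Hence $\varphi_f(\sigma \times \tau) = 0$, and substituting back into the displayed formula of Proposition~\ref{prop:fundamental} yields $\delta \lambda_{f,g}(\sigma \times \tau) = -\varphi_g(\sigma \times \tau)$.

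The only mildly delicate point is checking that general position of $f$ really forces $\I(f(\sigma),f(\tau)) = 0$ from disjointness of images; but this is precisely what the intersection number definition gives, since for simplices in general position of complementary dimensions it is defined to be $0$ whenever they fail to meet. So this lemma is really a packaging of the obvious fact that the intersection cocycle vanishes on disjoint pairs, recombined with the cohomological identity from the previous section.
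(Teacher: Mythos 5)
Your proposal is correct and follows exactly the paper's argument: combine Proposition~\ref{prop:fundamental} with the observation that $\varphi_f(\sigma\times\tau) = (-1)^{\dim\sigma}\,\I\big(f(\sigma),f(\tau)\big) = 0$ when the images are disjoint. The paper's proof is the same one-liner, merely leaving the appeal to Proposition~\ref{prop:fundamental} implicit.
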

\begin{proof}
$\quad\varphi_f (\sigma \times \tau) = (-1)^{\dim \sigma} 
                                 \I\big(f(\sigma), f(\tau)\big) = 0
$.
\end{proof}

\begin{remark}
The expression 
\[
\delta \lambda (\sigma \times \tau)\ \ = \ \ \sum_{i=0}^{\dim \sigma} (-1)^i \lambda (\sigma^i \times \tau) +
\sum_{j=0}^{\dim \tau} (-1)^{\dim \sigma +j} 
      \lambda (\sigma \times \tau^j) 
\]
is linear in the coefficients of the deformation cochain $\lambda$.
So for every pair $\sigma \times \tau$  of simplices of complementary
dimensions with disjoint images we get a linear equation that is valid 
for the coefficients of $\lambda_{f,g}$.
\end{remark}

This is particularly useful when we assume the existence of a
geometric realization but can also be used to express geometric immersability.
 So we gather all information we have on the 
deformation cochain in our main Theorem:

\begin{thm}[Obstruction Polytope]
\index{obstruction!polytope}
\index{obstruction!system}
\label{thm:system}
If there is a geometric realization of the simplicial complex ${\K}$ 
in $\R^m$
then the obstruction polytope in the cochain space $\mathcal{C}^{m-1}(\dprod{\S}, \R)$ given by the following 
inequalities contains a point $\lambda \in \mathcal{C}^{m-1}(\dprod{\S}, \Z)$ with integer coefficients.
\begin{enumerate}[\quad \rm 1.]
\item {\rm (The symmetries of Lemma \ref{lem:symmetries})}
\label{mt:sym}
\begin{equation*}
\lambda(\tau_1 \times \tau_2) =(-1)^{(\dim \tau_1 +1)(\dim \tau_2 +1)} \lambda_{f,g} (\tau_2 \times \tau_1)
\end{equation*}
for all $\tau_1 \times \tau_2 \in \dprod{\S}$,
\item {\rm (The deformation inequalities of Theorem \ref{thm:properties})}
\label{mt:deformation}
For every subset $J \subset \langle N\rangle$ 
\begin{enumerate}[\rm (a)]
\item \begin{equation*}
\sum_{\tau_+ \times \tau_- \in \mathcal{P}_J} |\lambda(\tau_+
  \times \tau_-)| \ \ \leq \ \ m - \ell_J^m,
\end{equation*}
\item 
\begin{equation*}
-1 \ \ \leq \ \ 
\sum_{\tau_+ \times \tau_- \in \mathcal{P}_J } \sgn (\tau_+, \tau_-) 
  \lambda(\tau_+\times  \tau_-)
\ \ \leq\ \  0 ,
\end{equation*}
and for every $\tau_+ \times \tau_- \in  \mathcal{P}_J$
\item 
\label{mt:bounds}
\begin{equation*}
- \left \lceil \frac{m-\ell_J^m}{2} \right \rceil \ \ \leq \ \ 
\sgn (\tau_+, \tau_-) \lambda(\tau_+
  \times  \tau_-)
\ \ \leq\ \  \left \lfloor \frac{m-\ell_J^m}{2} \right \rfloor,
\end{equation*} 
\end{enumerate}
\item {\rm (The intersection and linking inequalities of Proposition
  \ref{prop:intersection_inequalities})}
\begin{enumerate}[\rm (a)]
\item $\varphi_c(\sigma \times \tau) -1 \ \ \leq \ \  \delta
  \lambda(\sigma \times \tau) \ \ \leq\ \ \varphi_c(\sigma
  \times \tau) +1 $ \\
for all $\sigma \times \tau \in
\dprod{\S}$, $\dim \sigma +\dim \tau = m$, 
\item 
\label{mt:linking}
$\varphi_c(\sigma \times \del \tau) -1 \ \ \leq \ \ 
  \lambda(\del \sigma \times \del \tau) \ \ \leq\ \ \varphi_c(\sigma
  \times \del \tau) +1 $ \\
for all $\sigma \times \tau \in
 \dprod{\S}$, $\dim \sigma = m-1$ and $\dim \tau = 2$, 
\end{enumerate}
\item {\rm (The equations of Lemma \ref{eq:forbidden})} 
\label{mt:coboundary}
For every pair $\sigma \times \tau$ of simplices in
  $\dprod{\K}$:
\[
      \delta \lambda(\sigma \times \tau) \ \  = \ \  -\varphi_c(\sigma \times \tau)
\]
\end{enumerate} 
\end{thm}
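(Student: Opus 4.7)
The plan is to produce the required integer cochain as the deformation cochain $\lambda := \lambda_{f,c}$, where $f$ is (a normalized form of) the hypothesized geometric realization and $c$ is the cyclic reference map. Once $\lambda$ is constructed, each of the four families of conditions follows by direct appeal to a result already established in the preceding sections.

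First I would carry out a preprocessing step. Let $f_0 : |\K| \to \R^m$ be the hypothesized simplicial embedding. Since $c(0), \ldots, c(m)$ are affinely independent (by the Vandermonde structure of the moment curve) and so are $f_0(0), \ldots, f_0(m)$ (by general position), there is an affine isomorphism $A$ of $\R^m$ with $A(f_0(i)) = c(i)$ for every $i \in \langle m \rangle$. Replacing $f_0$ by $A \circ f_0$ (still a simplicial embedding) and applying a small generic perturbation on the remaining vertex images, I obtain a simplicial embedding $f$ with $f(i) = c(i)$ for $i \in \langle m \rangle$ and such that $\{f(0), \ldots, f(N), c(m+1), \ldots, c(N)\}$ is in general position. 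This puts us exactly in the hypothesis of Theorem~\ref{thm:properties} with $g = c$ and $k = m$. Setting $\lambda := \lambda_{f,c} \in \mathcal{C}^{m-1}(\dprod{\S}; \Z)$, integrality is automatic because for simplicial general position maps each coefficient of $\lambda_{f,c}$ is a signed intersection number of piecewise linear surfaces.

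Now the four conditions are verified one by one. Condition~1 (symmetries) is Lemma~\ref{lem:symmetries} applied to $\lambda_{f,c}$. The three parts of Condition~2 (deformation inequalities) are precisely the conclusions~(\ref{eq:points}),~(\ref{eq:index1}),~(\ref{eq:index2}) of Theorem~\ref{thm:properties}, where I use that $\varepsilon_c(J) = +1$ for every $J$ since the vertices of $c(J)$ listed in increasing order of indices form a positively oriented simplex (Vandermonde again). Condition~3 (intersection and linking inequalities) is Proposition~\ref{prop:intersection_inequalities}, whose underlying bound $|\varphi_f| \leq 1$ is in fact sharpened to $\varphi_f \equiv 0$ because $f$ is an embedding. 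Condition~4 (coboundary equations) is Lemma~\ref{eq:forbidden}: for every $\sigma \times \tau \in \dprod{\K}$ the embedding property yields $f(\sigma) \cap f(\tau) = \emptyset$, so $\delta\lambda(\sigma \times \tau) = -\varphi_c(\sigma \times \tau)$.

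The main (essentially only non-bookkeeping) step is the affine preprocessing. Without arranging $f(i) = c(i)$ for $i \in \langle m \rangle$, Theorem~\ref{thm:properties} would apply only with $k = 0$, weakening the bound in Condition~2(a) from $m - \ell_J^m$ to $m$ and losing the sharpening over Novik's original bounds. The subsequent generic perturbation is harmless because the embedding property is stable under small perturbations of the vertex images and the signed intersection numbers defining $\lambda$ are locally constant.
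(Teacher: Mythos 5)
Your proposal is correct and follows the paper's own proof essentially verbatim: normalize the realization so that the first $m+1$ vertices coincide with the cyclic map (the paper merely asserts the existence of such a map, while you justify it via an affine transformation plus a small generic perturbation), then take the deformation cochain between the realization and $c$ and invoke Lemma~\ref{lem:symmetries}, Theorem~\ref{thm:properties} with $k=m$ and $\varepsilon_c\equiv 1$, Proposition~\ref{prop:intersection_inequalities}, and Lemma~\ref{eq:forbidden}. One small caveat: your parenthetical sharpening ``$\varphi_f\equiv 0$'' holds only on $\dprod{\K}$, not on all of $\dprod{\S}$ (the affine extension of the embedding may have intersections among faces of $\S$ that are not faces of $\K$), but since only $|\varphi_f|\le 1$ is needed for Condition~3, this does not affect the argument.
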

\begin{proof}
If there is a geometric realization $f: \langle N \rangle \to \R^m$
then there also is a geometric realization $\tilde{f}$ such that the first $m$ vertices 
satisfy $\tilde{f}(i) = c(i)$ for $i \in \langle m \rangle$ and such that the set 
$\{\tilde{f}(0), \ldots ,\tilde{f}(N), c(m+1), \ldots , c(N)\}$ is in
general position. The deformation cochain $\lambda_{c, \tilde{f}}$ has
the desired properties as $\varepsilon_c \equiv 1$.
\end{proof}

\begin{remark}
The system I.~Novik described, consists of the equations \ref{mt:coboundary} along with the equations 
\ref{mt:sym} and the bounds from Remark \ref{remark:novik} for pairs of simplices in $\dprod{\K}$ only. 
\end{remark}

\section{Subsystems and Experiments}
\label{section:experiments}
 
Theorem~\ref{thm:system} provides us with a system of linear
equations and inequalities that has an integer solution if the
complex $\K$ has a geometric realization.
So we can attack non-realizability-proofs by solving integer programming 
feasibility problems.
However the system sizes grow rapidly with the number of vertices. There are $\mathcal{O}(n^{m+1})$ variables 
in the system associated to a complex with $n$ vertices and target ambient dimension $m$. For Brehm`s triangulated 
M\"obius strip (and all other complexes on 9 vertices) we already get 
1764 variables. The integer feasibility 
problems --- even for
complexes with few vertices --- are therefore much too big to be sucessfully solved with standard integer programming software. 
On the other hand for a non-realizability proof it suffices to exhibit a subsystem 
of the obstruction system that has no solution.

In this section  we therefore look at subsystems of the obstruction system, that only use those variables 
associated to simplices that belong to the complex $\K$ and certain sums of the other variables. 

\begin{sub}
\label{cor:variable_approach}
If there is a geometric realization of the simplicial complex $\K$ in $\R^m$
then there is a cochain $\lambda \in \mathcal{C}^{m-1}(\dprod{\K})$ that satisfies 
the equations of Lemma \ref{eq:forbidden} for every pair of simplices in $\dprod{\K}$
and the linking inequalities \eqref{mt:linking} of
Proposition \ref{prop:intersection_inequalities} that only use values of
$\lambda$ on $\dprod{\K}$.

The deformation inequalities of Theorem \ref{thm:properties} imply the following 
for the variables under consideration:
For every subset $J \subset \langle N\rangle$ we have
\begin{equation}
\sum_{\tau_+ \times \tau_- \in \mathcal{P}_J \cap \dprod{\K}}
  |\lambda(\tau_+\times \tau_-)|  + |y_J| \ \ \leq \ \ m - \ell_J^m,
\end{equation}
\begin{equation}
-1 \ \ \leq \ \ 
\sum_{\tau_+ \times \tau_- \in \mathcal{P}_J \cap \dprod{\K}} \sgn (\tau_+, \tau_-) \lambda(\tau_+ \times  \tau_-) +y_J 
\ \ \leq\ \  0
\end{equation}
by introducing the new variable $y_J$ for \lq the rest of the sum\rq.
We still have for every $\tau_+ \times \tau_- \in  \dprod{\K}$
\begin{equation}
- \left \lceil \frac{m-\ell_J^m}{2} \right \rceil \ \ \leq \ \ 
\sgn (\tau_+, \tau_-) \lambda_{f,g} (\tau_+
  \times  \tau_-)
\ \ \leq\ \  \left \lfloor \frac{m-\ell_J^m}{2} \right \rfloor.
\end{equation} 
and the same bounds hold for $y_J$.
\end{sub}

We can even do with less variables at the expense of more inequalities.

\begin{sub}
\label{sub:full} If there is a geometric realization of the simplicial complex $\K$ in $\R^m$
then there is a cochain $\lambda \in \mathcal{C}^{m-1}(\dprod{\K})$ that satisfies 
the equations of Lemma \ref{eq:forbidden} for every pair of simplices in $\dprod{\K}$
and the linking inequalities \eqref{mt:linking} of
Proposition \ref{prop:intersection_inequalities} that only use values of
$\lambda$ on $\dprod{\K}$.
The deformation inequalities of Theorem \ref{thm:properties} imply
inequalities for every subset $S$ of $\mathcal{P}_J \cap \dprod{\K}$.
\begin{equation}
- \left \lceil \frac{m-\ell_J^m}{2} \right \rceil \ \ \leq \ \ 
\sum_{\tau_+ \times  \tau_- \in S}\sgn (\tau_+, \tau_-) 
     \lambda_{f,g} (\tau_+ \times  \tau_-)
\ \ \leq\ \  \left \lfloor \frac{m-\ell_J^m}{2} \right \rfloor.
\end{equation}
and
\begin{equation}
\sum_{\tau_+ \times \tau_- \in \mathcal{P}_J \cap \dprod{\K}}
  |\lambda(\tau_+\times \tau_-)| \ \ \leq \ \ m - \ell_J^m,
\end{equation} 
\end{sub}

The systems of the above Corollaries are generated by the gap program 
\url{generate_obstructions.gap} that can be obtained via my homepage
\url{http://www.math.tu-berlin.de/~timmreck}.

The resulting systems can be examined further by integer programming software.
I ran several experiments using SCIP \cite{scip} to examine the resulting systems.
Table \ref{table:results} gives an overview on system sizes and solution times. 
$\mathcal{M}_g$ denotes an orientable surface of genus $g$. The triangulations under 
consideration have the minimum number of vertices and can be found in the file. 
$\mathcal{B}$ denotes the triangulated M\"obius strip by Brehm. 
The systems under consideration are those of Subsystem \ref{sub:full} expressing the inequalities 
involving absolute values without the use of new variables.

\begin{table}
{\small
\begin{tabular}{|c|c|c|c|c|c|c|c|}
surface & file & realizable & $f$-vector & var. & constr. & solv. & time\\
\hline
$\R P^2$ & rp2.gap & no & (6, 15, 10) & 150 & 1365 & no & 0.24 sec \\
$\mathcal{B}$ & moebius.gap & no \cite{brehm1983} & (9, 24, 15) & 510 & 2262 & no &46.3 sec \\
$\mathcal{M}_0$ & bipyramid.gap & yes & (5, 9, 6) & 48 & 500 & yes & 0.1 sec \\
$\mathcal{M}_1$ & csaszar.gap & yes & (7, 21, 14) & 322 & 2583 & yes & 0.78 sec \\
$\mathcal{M}_2$ & m2\_10.gap & yes \cite{lutz2004}& (10, 36, 24) & 1136 & 5888 & yes & 34.83 sec \\
$\mathcal{M}_3$ & m3\_10.gap & yes \cite{HougardyLutzZelke2006pre}& (10, 42, 28) & 1490 & 9847 & yes & 143 sec \\
$\mathcal{M}_4$ & m4\_11.gap & yes \cite{bokowski1989}& (11, 51, 34) & 2248 & 15234 & yes & 564 min\\
$\mathcal{M}_5$ & m5\_12.gap & ? & (12, 60, 40) & 3180 & 21840 & ? & \\
$\mathcal{M}_6$ & altshuler54.gap & no \cite{bokowski2000} & (12, 66, 44) & 3762 & 33473 & ? & \\
\end{tabular}
}
\caption{Computational results}
\label{table:results}
\end{table}

\bigskip
The smallest system showing the non-realizability of the M\"obius strip only uses the parts
\ref{mt:sym},  \ref{mt:bounds}, \ref{mt:linking} and \ref{mt:coboundary} of Theorem \ref{thm:system} 
and has 510 variables and 426 constraints.
The systems for genus 5 and 6 using only these parts of  Theorem \ref{thm:system} are solvable.

\providecommand{\bysame}{\leavevmode\hbox to3em{\hrulefill}\thinspace}
\providecommand{\MR}{\relax\ifhmode\unskip\space\fi MR }
% \MRhref is called by the amsart/book/proc definition of \MR.
\providecommand{\MRhref}[2]{%
  \href{http://www.ams.org/mathscinet-getitem?mr=#1}{#2}
}
\providecommand{\href}[2]{#2}

\label{Timmreck:end}
%\bibliographzstyle{amsplain}
%\bibliography{dagmar}
\end{document}